\newtheorem{theorem}{Theorem}[section]
\newtheorem{lemma}[theorem]{Lemma}
\newtheorem{corollary}[theorem]{Corollary}
\newtheorem{proposition}[theorem]{Proposition}
\newtheorem{problem}[theorem]{Problem}
\theoremstyle{definition}
\newtheorem{remark}[theorem]{Remark}
\numberwithin{equation}{section}
\newcommand{\Z}{\mathbb{Z}}
\newcommand{\R}{\mathbb{R}}
\newcommand{\Q}{\mathbb{Q}}
\newcommand{\C}{\mathbb{C}}
\newcommand{\CP}{\mathbb{C}P}
\newcommand{\K}{\widetilde{K}}
\newcommand{\KO}{\widetilde{KO}}
\DeclareMathOperator{\im}{im}
\DeclareMathOperator{\rk}{rank}
\DeclareMathOperator{\id}{id}
\DeclareMathOperator{\ch}{ch}
\DeclareMathOperator{\Vect}{Vect}
\begin{document}

\title{Almost Complex Structures on Homotopy Complex Projective Spaces}

%author information
\author{Keith Mills}
\address{}
\curraddr{}
\email{kmills96@umd.edu}

\subjclass[2020]{Primary 57R15. Secondary 57R20, 32Q60, 19L64}
\keywords{}

\date{January 26, 2023}

%% =====================================================================

\begin{abstract}
We show that all homotopy $\CP^n$s, smooth closed manifolds with the oriented homotopy type of $\CP^n$, admit almost complex structures for $3 \leq n \leq 6$, and classify these structures by their Chern classes. Our methods provide a new proof of a result of Libgober and Wood on the classification of almost complex structures on homotopy $\CP^4$s.
\end{abstract}

\maketitle

\section{Introduction} \label{introduction}

Let $X$ be a (compact orientable) smooth manifold. A \textit{complex structure} on a real vector bundle $E$ over $X$ is a complex vector bundle $F$ over $X$ whose underlying real bundle is isomorphic to $E$, and a \textit{stable complex structure} on $E$ is a complex structure on $E \oplus \epsilon^k$ for some nonnegative integer $k$, where $\epsilon^k$ is the trivial rank-$k$ bundle over $X$. We say $X$ is \textit{almost complex} if its tangent bundle $TX$ admits a complex structure, and \textit{stably almost complex} if $TX$ admits a stable complex structure.

In general, admission of almost complex structures is not a homotopy invariant of smooth closed manifolds. Kahn shows in Corollary 6 of \cite{kahn1969obstructions} how to find examples of manifolds of dimension $8k$ for arbitrary $k \geq 1$ with the same oriented homotopy type such that one is almost complex while the other is not. Further, in \cite{sutherland193example}, Sutherland constructs a pair of oriented homotopy equivalent manifolds of large dimension, one of which is a $\pi$-manifold while the other is not even stably almost complex. Sutherland's construction can also be used to give arbitrarily highly connected examples of this phenomenon and show that admission of stable almost complex structures is not even invariant under PL-homeomorphisms.

In this paper we consider the question of whether homotopy complex projective spaces admit almost complex structures. By \textit{homotopy complex projective space} we mean a smooth closed manifold with the oriented homotopy type of $\CP^n$ for some $n$. All homotopy complex projective spaces admit stable almost complex structures, a fact we will discuss in section \ref{preliminaries}. By contrast, the question of whether there exist homotopy $\CP^n$s that do not admit almost complex structures is still open at the time of the writing of this paper.

We can quickly dispense of the cases $n \leq 3$. The problem is trivial for $n=1$. For $n=2$, Theorems 1 and 2 of \cite{doldwhitney1959bundles} classify oriented rank-$4$ bundles over $4$-dimensional CW complexes: two oriented rank-$4$ bundles over a homotopy $\CP^2$ are equivalent if and only if they have the same characteristic classes $w_2$, $e$, and $p_1$, but for $4$-manifolds all of these are invariants of oriented homotopy type. Thus an orientation-preserving homotopy equivalence $f: X \to \CP^2$ pulls back $T\CP^2$ to $TX$, so every homotopy $\CP^2$ is almost complex.

Homotopy $\CP^3$s are also almost complex. From the obstruction theory point of view developed in \cite{kahn1969obstructions}, all obstructions for a homotopy $\CP^3$ to admit a stably almost complex structure vanish, and all stably complex manifolds of dimension $6$ are almost complex by the following argument. The only obstruction for a stably almost complex $6$-manifold $X$ to admit an almost complex structure lies in $H^3(X; \pi_2(SO(6)/U(3))).$ The coefficient group is in the stable range, and Massey shows in Theorem I of \cite{massey1961obstructions} that integral Stiefel-Whitney classes are integral multiples of such obstructions. Since $X$ admits a stable almost complex structure, its second Stiefel-Whitney class admits an integral lift (namely the first Chern class of the stable almost complex structure) and hence its third integral Stiefel-Whitney class vanishes, so the obstruction in question also vanishes.

For $n=4$, the answer to our question is also yes, proved by Libgober and Wood in \cite{libgoberwood1990kahler} using a result of Heaps in \cite{heaps1970almost} concerning almost complex structures on $8$-manifolds. They also provide a classification of almost complex structures on homotopy $\CP^4$s. This result uses the classification of homotopy $\CP^n$s, carried out by Brumfiel in \cite{brumfiel1968differentiable} for $n \leq 6$, as its starting point. We aim to use Brumfiel's calculations for $n=5,6$. With this in mind, our question is

\begin{problem} \label{cpprob}
Let $X$ be a smooth closed manifold homotopy equivalent to $\CP^n$ for $n = 4, 5,$ or $6$. Use the surgery classification of homotopy $\CP^n$s for such $n$ to determine whether $X$ admits an almost complex structure.
\end{problem}

We include the case $n=4$ above because our methods will provide a second proof of Libgober and Wood's result. For each value of $n$ under consideration, we will answer Problem \ref{cpprob} affirmatively and provide a classification of almost complex structures on homotopy $\CP^n$s.

The paper is structured as follows. In section \ref{preliminaries} we discuss the main tools relevant to solving Problem \ref{cpprob}, namely results that allow the problem to be solved by computations in characteristic classes as well as a summary of Brumfiel's surgery classification of homotopy complex projective spaces. The main theorem that we appeal to in solving our problem for $n=4$ and $n=6$ is Theorem \ref{cp46thm}, which says that almost complex structures are in one-to-one correspondence with complex vector bundles with the ``correct" Chern classes in the sense that certain combinations of Chern classes agree with the Pontrjagin classes of the manifold and the top-dimensional Chern class is the Euler class of the manifold. This theorem only applies when the real $K$-theory of the manifold has no $2$-torsion, and thus is not valid for homotopy $\CP^n$s where $n \equiv 1$ mod $4$. In this section we will also remark on what is required to approach this problem in general.

For $n=4, 6$ our method of proof relies on finding $n$-tuples of Chern classes over homotopy $\CP^n$s that satisfy the conditions in Theorem \ref{cp46thm}. In order to do so, one needs to be able to determine whether an integral $n$-tuple is indeed a Chern vector of some bundle over a homotopy $\CP^n$. This was already accomplished by Thomas in \cite{thomas1974almost}. In section \ref{chernvects} we characterize the image of the Chern character for $\CP^n$ and as a consequence obtain a result equivalent to Thomas' that makes finding Chern vectors that satisfy the conditions given in Theorem \ref{cp46thm} a problem in linear algebra and elementary number theory, stated in Corollary \ref{linalgforCP}.

In section \ref{accp4s} we carry out the solution to such a problem, obtaining a new proof of Libgober and Wood's result that all homotopy $\CP^4$s are almost complex. In section \ref{accp6s} we follow the same process for homotopy $\CP^6$s and find that all homotopy $\CP^6$s are almost complex.

Since Theorem \ref{cp46thm} does not apply when $n=5$, we must take a different approach for homotopy $\CP^5$s. Instead our goal is to appeal to Theorem \ref{stabunstab}, which says that one can find an almost complex structure on a homotopy $\CP^5$ provided that one can find a stable almost complex structure whose top-dimensional Chern class equals the Euler class of the manifold. As discussed in section \ref{preliminaries}, this involves finding a preimage of the class of the stable tangent bundle in $\KO(X)$ under the real reduction map $r$.

In section \ref{accp5s} we compute the real reduction map $r$ using Adams operations and a lemma of Sanderson from \cite{sanderson1964immersions}. For an arbitrary homotopy $\CP^5$ we use this computation to find such a preimage with the correct top-dimensional Chern class, showing that all homotopy $\CP^5$s admit almost complex structures. In doing so, we find a classification of stable and unstable almost complex structures on homotopy $\CP^5$s.

\section*{Acknowledgments} \label{acknowledgments}

I would like to thank Jonathan Rosenberg for his assistance and encouragement at all stages in the preparation of this paper. I also wish to thank the reviewers for helpful comments and corrections on earlier versions of the paper, alerting me to the result in \cite{doldwhitney1959bundles}, and for comments that are essentially Remark \ref{generalsetup}.

\section{Preliminaries} \label{preliminaries}

For a compact Hausdorff space $X$ denote by $K(X)$ and $KO(X)$ the complex and real $K$-groups (respectively), and $\K(X)$ and $\KO(X)$ the reduced $K$-groups. Let $r: K(X) \to KO(X)$ denote the real reduction map, and note that $r$ restricts to a map $\K(X) \to \KO(X)$ which we also denote by $r$. Complexification of real vector bundles and complex conjugation yield maps $c: KO(X) \to K(X)$ and $t: K(X) \to K(X)$, respectively, and these maps also have reduced versions which we again denote by $c$ and $t$. Note that $c$ and $t$ are ring homomorphisms while $r$ is only a homomorphism of $KO$-modules, and recall the well-known identities $r \circ c = 2: KO(X) \to KO(X)$ and $c \circ r = 1 + t: K(X) \to K(X)$.

Let $X$ be a compact orientable smooth manifold. From the point of view of $K$-theory, we can interpret the condition for a vector bundle $E$ to admit a stable complex structure as requiring that $E - \rk{E} \in \widetilde{KO}(X)$ is in the image of $r: \K(X) \to \KO(X)$.

As mentioned in section \ref{introduction}, all homotopy complex projective spaces admit stably almost complex structures. This is known, but we are unable to produce a reference for this fact.

\begin{proposition} \label{CPsareSAC}
All homotopy complex projective spaces are stably almost complex.
\end{proposition}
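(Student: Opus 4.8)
The plan is to use the $K$-theoretic criterion recorded just above: a homotopy $\CP^n$, say $X$, is stably almost complex exactly when the stable tangent class $\tau := [TX] - \rk(TX) \in \KO(X)$ lies in the image of the realification map $r \colon \K(X) \to \KO(X)$. Fix an orientation-preserving homotopy equivalence $f \colon X \to \CP^n$. Since $K$-theory and $KO$-theory are homotopy invariant, $f^*$ identifies $\K(X) \cong \K(\CP^n)$ and $\KO(X) \cong \KO(\CP^n)$, and by naturality it commutes with $r$, $c$, and $t$. The standard projective space is genuinely complex: writing $H$ for the tautological line bundle, $[T\CP^n] = r\big((n+1)(\overline{H} - 1)\big)$ in $\KO(\CP^n)$, so $f^*[T\CP^n]$ already lies in $\im r$. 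Hence it suffices to prove that the difference $\delta := \tau - f^*[T\CP^n]$ lies in $\im r$.

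First I would record that there is no rational obstruction. From $c \circ r = 1 + t$ and $r \circ c = 2$ one checks that $r \otimes \Q$ is surjective (realify $c(y)/2$), so $\im r$ has finite index; and $r \circ c = 2$ gives $2\,\KO(X) \subseteq \im r$, so the cokernel of $r$ is a finite $2$-group. By Fujii's computation of $KO(\CP^n)$ this cokernel is controlled by the free summands together with, when $n \equiv 1 \bmod 4$, the $\Z/2$ torsion flagged in the introduction. The key reduction is then the following: since $rc = 2$ forces $\ker c$ to be $2$-torsion, if the self-conjugate class $c(\delta) \in \K(\CP^n)$ lies in $\im(1+t) = \im(c \circ r)$, then $\delta \in \im r$ modulo a $2$-torsion ambiguity living in $\ker c$.

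The geometric input that pins down $c(\delta)$ is that $\delta$ is fiber-homotopy trivial: a homotopy equivalence of closed smooth manifolds preserves Spivak normal fibrations, so the stable tangent bundles of $X$ and $\CP^n$ are stably fiber-homotopy equivalent, whence $J(\delta) = 0$. Now $c(\delta)$ is the complexified tangent difference, determined by the Pontrjagin classes of $X$ because $c_{2i}(TX \otimes_\R \C) = (-1)^i p_i(X)$ while the odd Chern classes vanish ($H^*(\CP^n; \Z)$ being torsion-free). I would feed the $J$-triviality into the Adams ``image of $J$'' congruences to force $c(\delta)$ into $\im(1+t)$, or equivalently read this off directly from Brumfiel's surgery description of the tangent bundles of homotopy $\CP^n$s summarized in this section. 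Combined with the previous paragraph, this places $\delta$, and hence $\tau$, in $\im r$ up to $2$-torsion.

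The main obstacle is closing the residual $2$-torsion gap, which is genuine precisely when $n \equiv 1 \bmod 4$ and $\KO(\CP^n)$ carries its $\Z/2$ summand; here ``$\im r$ up to $2$-torsion'' is not enough. I expect to resolve this by computing $r$ explicitly on the torsion via Adams operations and Sanderson's lemma — the same machinery developed in section \ref{accp5s} — and checking that the $J$-trivial class $\delta$ avoids the nonzero coset. The relevant constraint should be that $w_2(X) = w_2(\CP^n)$ admits an integral lift for every homotopy $\CP^n$ (as it does for the standard one), which is exactly the condition preventing the torsion obstruction from obstructing a stable complex structure.
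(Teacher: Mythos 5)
Your proposal contains genuine gaps at its two load-bearing steps. First, the claim that $J(\delta)=0$ forces $c(\delta)\in\im(1+t)$ is asserted but never argued: this is not a standard consequence of the Adams conjecture or image-of-$J$ congruences, and you give no mechanism for it. The alternative you offer --- reading it off from Brumfiel's description --- only exists for $n\le 6$ (the paper notes the classification is impractical beyond that), while the proposition concerns all $n$; and even for $n\le 6$, Brumfiel gives $\delta$ as a polynomial in $\omega$ with zero constant term, and concluding that such a class lies in $\im r$ requires the $KO$-module property of $r$ (namely $\omega^k=r(c(\omega)^{k-1}L)$), a step you never take. Second, and more seriously, the case you yourself identify as the crux --- the residual $2$-torsion when $n\equiv 1\pmod 4$ --- is left entirely open: ``I expect to resolve this by computing $r$ explicitly'' is a plan, not a proof, and this is exactly the hard case (it is why the paper must work separately over $\CP^5$ in its section on homotopy $\CP^5$s). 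Your proposed substitute criterion, that $w_2(X)$ admit an integral lift, cannot close the gap either: vanishing of $W_3$ is only the first obstruction to a stable complex structure, and in high dimensions there are higher obstructions, so it is necessary but not sufficient.

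The missing idea is that no analysis of the particular class $\delta$ is needed at all: the paper proves that $r\colon\K(\CP^n)\to\KO(\CP^n)$ is surjective outright. A piece of the exact sequence relating real and complex $K$-theory reads
\[
\K^{-2}(\CP^n)\xrightarrow{\;r\beta^{-1}\;}\KO^{0}(\CP^n)\longrightarrow\KO^{-1}(\CP^n),
\]
and Fujii's computation gives $\KO^{-1}(\CP^n)=0$, so $r\beta^{-1}$, hence $r$, is surjective; by homotopy invariance of $K$-theory the same holds over any homotopy $\CP^n$, and in particular the stable tangent class lies in $\im r$. Surjectivity of $r$ (which one can also see from $\omega^k=r(c(\omega)^{k-1}L)$, since $\KO(\CP^n)$ is generated by powers of $\omega$) dissolves both your $\im(1+t)$ step and the $2$-torsion issue simultaneously, and renders the Spivak-fibration and $J$-homomorphism input a red herring.
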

\begin{proof}
We will show that the real reduction map $r: \K(\CP^n) \to \KO(\CP^n)$ is surjective. A portion of the exact sequence relating real and complex $K$-theory (\cite{karoubi2008}, 5.18) reads
\[
\K^{-2}(\CP^n) \xrightarrow{r\beta^{-1}} \KO^0(\CP^n) \xrightarrow{} \KO^{-1}(\CP^n)
\]
where $\beta$ is the Bott isomorphism. But $\KO^{-1}(\CP^n) = 0$ by Fujii's computation of the $KO$-groups of complex projective space (\cite{fujii1967projective}, Theorem 2), so $r\beta^{-1}$ is surjective. It follows that $r$ is surjective.
\end{proof}

Under mild hypotheses, the obstruction to improving a stable complex structure to a genuine complex structure is the top-dimensional Chern class, as in the following theorem.

\begin{theorem}[\cite{thomas1967complex}, Theorem 1.7] \label{stabunstab}
Let $\xi$ be a real orientable rank-$2n$ bundle over a CW complex $X$. Suppose that $n>0$, $\dim X \leq 2n$, and $H^{2n}(X;\Z)$ has no $2$-torsion. Then $\xi$ admits a complex structure if and only if it admits a stable complex structure $\omega$ such that $c_{n}(\omega) = e(\xi)$, where $c_n$ denotes the $n$th Chern class and $e$ the Euler class.
\end{theorem}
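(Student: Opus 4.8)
The plan is to run an obstruction-theoretic destabilization argument, comparing sections of the bundle of complex structures on $\xi$ with sections of its stabilizations. First I would dispose of the \emph{only if} direction, which is essentially free: if $\xi$ carries a complex structure $J$, then $\xi$ becomes a rank-$n$ complex bundle whose top Chern class equals the Euler class of the underlying oriented real bundle, so regarding $J$ as a stable complex structure $\omega$ gives $c_n(\omega) = e(\xi)$. (One only needs the complex orientation to agree with the given one.)

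For the \emph{if} direction I would translate everything into sections. Let $E_n \to X$ be the bundle with fiber $SO(2n)/U(n)$ associated to $\xi$, whose sections are the complex structures on $\xi$, let $E_{n+1} \to X$ be the bundle with fiber $SO(2n+2)/U(n+1)$ associated to $\xi \oplus \epsilon^2$, and continue to $E_\infty$ with fiber $SO/U$, whose sections are stable complex structures. The geometric input is the fibration
$$SO(2n)/U(n) \to SO(2n+2)/U(n+1) \xrightarrow{\;J \mapsto Je\;} S^{2n},$$
sending an orthogonal complex structure $J$ on $\R^{2n+2}$ to the unit vector $Je$ for a fixed $e$; its fiber inclusion therefore has homotopy fiber $\Omega S^{2n}$. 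Since $\Omega S^{2n}$ is $(2n-2)$-connected with $\pi_{2n-1}(\Omega S^{2n}) = \pi_{2n}(S^{2n}) = \Z$, lifting a section of $E_{n+1}$ to a section of $E_n$ carries a single obstruction, lying in $H^{2n}(X; \Z)$ (the local system is trivial by orientability of $\xi$); here $\dim X \leq 2n$ is what kills all higher obstructions.

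Next I would reduce the given stable structure by one dimension. For $m \geq n+1$ the stabilization $SO(2m)/U(m) \to SO(2m+2)/U(m+1)$ has homotopy fiber $\Omega S^{2m}$, which is at least $2n$-connected, so over a complex of dimension $\leq 2n$ each such step lifts with no obstruction and uniquely up to homotopy. Hence $\omega$ determines, uniquely up to fiberwise homotopy, a section $\tilde\omega$ of $E_{n+1}$, i.e.\ a genuine complex structure on $\xi \oplus \epsilon^2$, and since Chern classes are stable, $c_n(\tilde\omega) = c_n(\omega)$. The existence of a complex structure on $\xi$ is thereby equivalent to the vanishing of the single obstruction $o(\tilde\omega) \in H^{2n}(X;\Z)$ to destabilizing $\tilde\omega$ across the fibration above.

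The heart of the proof, and the step I expect to be hardest, is the explicit identification of $o(\tilde\omega)$. Composing $\tilde\omega$ with the fiberwise map induced by $J \mapsto Je$ converts the obstruction into an Euler-class computation for the sphere bundle of $\xi$, and comparing the generator of $\pi_{2n-1}(\Omega S^{2n}) = \Z$ with the top Chern class of the tautological bundle over $SO(2n)/U(n)$ should yield
$$o(\tilde\omega) = \pm\bigl(e(\xi) - c_n(\omega)\bigr)$$
in $H^{2n}(X;\Z)$, up to an ambiguity by $2$-torsion arising from the even multiple by which the fiber generator is detected by $c_n$. This is precisely where the hypothesis that $H^{2n}(X;\Z)$ has no $2$-torsion enters: it makes the identification exact, so that $c_n(\omega) = e(\xi)$ forces $o(\tilde\omega) = 0$ and hence produces the desired complex structure.
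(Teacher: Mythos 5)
You should know at the outset that the paper does not prove this statement at all: it is imported verbatim as Theorem 1.7 of Thomas's paper \cite{thomas1967complex}, so there is no in-paper argument to compare against, and your attempt has to be judged against the standard obstruction-theoretic proof. Your skeleton is exactly that standard route, and most of it is sound: the easy ``only if'' direction, the translation into sections of the associated bundles with fibers $SO(2n)/U(n)$, the fibration $SO(2n)/U(n) \to SO(2n+2)/U(n+1) \to S^{2n}$ via $J \mapsto Je$, the unobstructed and unique destabilization from $E_m$ ($m \geq n+2$) down to $E_{n+1}$ over a complex of dimension $\leq 2n$, and the reduction to a single obstruction $o(\tilde\omega) \in H^{2n}(X;\Z)$.

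The gap is at the step you yourself flagged as the heart, and it is genuine: the identification of the obstruction is off by a factor of $2$ in the direction that matters. The correct relation is $2\,o(\tilde\omega) = \pm\bigl(c_n(\omega) - e(\xi)\bigr)$, not $o(\tilde\omega) = \pm\bigl(e(\xi) - c_n(\omega)\bigr)$ up to $2$-torsion. You can see your formula fail already for $n=1$, $X=S^2$: identify $SO(4)/U(2) \cong S^2$ with the unit imaginary quaternions $q$ acting on $\mathbb{H}=\R^4$ by left multiplication. The tautological complex bundle over this fiber splits as $\underline{\C} \oplus (TS^2, q\times\cdot)$ (the first factor is trivialized by the frame $(1,q)$), so its $c_1$ is \emph{twice} a generator of $H^2(SO(4)/U(2))$, while the projection $J \mapsto Je$, i.e.\ $q \mapsto q$, is the identity on $S^2$. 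Hence changing a section of $E_2$ by a difference class $d$ changes $c_1$ by $2d$ but changes the destabilization obstruction by $d$: for $e(\xi)=k$ and a stable structure with $c_1(\omega)=k+2j$, the obstruction is $\pm j$, whereas $e(\xi)-c_1(\omega) = -2j$. Since $H^2(S^2;\Z)$ is torsion-free, no ``$2$-torsion ambiguity'' can reconcile $j$ with $\pm 2j$; your displayed equation is simply false. The even-multiple phenomenon you mention is real, but its correct consequence is that $c_n - e$ determines only $2o$, so that $o$ is a ``half'' of $c_n - e$, well defined only up to classes killed by $2$.

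The repair preserves your architecture but inverts the logic of the last step. One shows in general that $c_n(\omega) - e(\xi) = \pm 2\, d(s_J, f)$, e.g.\ as follows: inside $\eta = \xi \oplus \epsilon f$ both $Je$ and $f$ are unit sections; the orthogonal complement of $Je$ is the complement of the trivial complex line $\C e$ in $(\xi\oplus\epsilon^2, J)$ and so has Euler class $c_n(\omega)$, the complement of $f$ is $\xi$ with Euler class $e(\xi)$, and for any two sections $s,t$ of the sphere bundle of an oriented $(2n+1)$-plane bundle over a complex of dimension $\leq 2n$ one has $e(s^\perp) - e(t^\perp) = \pm 2\, d(s,t)$ (check this on $\epsilon^{2n+1}$ over $S^{2n}$, where $e(s_u^\perp) = 2\deg u$). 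Then $c_n(\omega)=e(\xi)$ gives $2\,o(\tilde\omega)=0$, and the hypothesis that $H^{2n}(X;\Z)$ has no $2$-torsion is what lets you conclude $o(\tilde\omega)=0$: it is needed to divide the equation by $2$, not to remove an additive torsion term from your identification.
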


In the situation that $\xi$ is the tangent bundle of a compact orientable manifold $X$ of dimension $2n$, the hypotheses of the above theorem are automatically satisfied, so $X$ is almost complex if and only if it is stably almost complex and the stably almost complex structure $E$ satisfies $c_n(E) = e(X)$. This conclusion is also found in Corollary 3 of \cite{kahn1969obstructions}.

Theorem \ref{stabunstab} will allow us to solve Problem \ref{cpprob} for $n=5$ after determining the homomorphism $r$. For $n=4,6$ we have a slightly upgraded form of the theorem:

\begin{theorem}[\cite{thomas1974almost}, Corollary 4.2] \label{cp46thm}
If $X$ is a $2n$-dimensional oriented manifold such that
\begin{enumerate}
    \item $H^{2j}(X)$ has only torsion relatively prime to $(j-1)!$,
    \item $\KO(X)$ has no $2$-torsion,
\end{enumerate}
then there is a one-to-one correspondence between almost complex structures on $X$ and elements $E \in \Vect_n(X)$, the rank-$n$ complex vector bundles over $X$, such that
\begin{enumerate}[label=(\alph*)]
    \item $c_{2i}(E \oplus \Bar{E}) = (-1)^{i}p_i(X), 1 \leq i \leq n$,
    \item $e(E) = e(X)$.
\end{enumerate}
\end{theorem}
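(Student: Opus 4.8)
The plan is to realize almost complex structures on $X$ as rank-$n$ complex bundles $E$ with $E_\R \cong TX$, and to read conditions (a) and (b) as exactly detecting such an isomorphism. First I would make the two conditions intrinsic to the real bundle $E_\R$. For any complex bundle $E$ one has $E_\R \otimes \C \cong E \oplus \bar E$, so by definition of the Pontrjagin classes $p_i(E_\R) = (-1)^i c_{2i}(E \oplus \bar E)$, while the oriented rank-$2n$ bundle $E_\R$ satisfies $e(E_\R) = c_n(E) = e(E)$. Hence (a) is equivalent to $p_i(E_\R) = p_i(X)$ for all $i$ and (b) to $e(E_\R) = e(X)$. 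The forward direction is then immediate: any almost complex structure writes $TX = E_\R$ for some $E \in \Vect_n(X)$, forcing (a) and (b). The whole theorem thus reduces to the converse---if $E \in \Vect_n(X)$ satisfies $p_i(E_\R) = p_i(X)$ and $e(E_\R) = e(X)$ then $E_\R \cong TX$---together with the bijectivity of the resulting correspondence.

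I would prove the converse in a stable step followed by an unstable one. For the stable step, pass to $\K(X)$ through complexification. Because $r \circ c = 2$ and $\KO(X)$ has no $2$-torsion by (2), the map $c$ is injective, so it is enough to show $c(r(E)) = c([TX])$ in $\K(X)$. By $c \circ r = 1 + t$ these are the honest bundles $E \oplus \bar E$ and $TX \otimes \C$. Both are self-conjugate, so in terms of Chern roots $x_a$ their total Chern classes are $\prod_a (1 - x_a^2)$, supported in degrees divisible by $4$: the odd Chern classes vanish identically and the even ones are $\pm$ the Pontrjagin classes, which agree by (a). Thus $E \oplus \bar E$ and $TX \otimes \C$ have identical Chern classes. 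Condition (1) is precisely the hypothesis that makes a class in $\K(X)$ detectable by its Chern classes: the comparison of Chern classes and the Chern character $\ch$ in degree $2j$ carries a $(j-1)!$ denominator---already visible in $\K(S^{2j})$---which is invisible against torsion prime to $(j-1)!$. Hence $c(r(E)) = c([TX])$, and injectivity of $c$ gives $r(E) = [TX]$, i.e. the stable isomorphism $E_\R \cong TX$.

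For the unstable step, the stable isomorphism $r(E) = [TX]$ exactly means that $E$ is a stable complex structure on the tangent bundle, and by (b) it has $c_n(E) = e(X)$. Theorem \ref{stabunstab}, applied to $\xi = TX$ (whose hypotheses hold, since $(n-1)!$ is even for $n \geq 3$, so condition (1) at $j = n$ forbids $2$-torsion in $H^{2n}(X;\Z)$), then produces a genuine complex structure on $TX$, so $X$ is almost complex; this already yields the existence half of the correspondence, which is what the applications need. Upgrading to the claimed bijection---that $E$ itself realifies to $TX$ and that the assignment is one-to-one---requires the finer obstruction-theoretic enumeration of lifts of the classifying map $X \to BSO(2n)$ through the fibration $SO(2n)/U(n) \to BU(n) \to BSO(2n)$, where conditions (1) and (2) keep every intermediate obstruction and difference class (valued in $\pi_*(SO(2n)/U(n))$) determined by the Chern and Euler data, matching the top destabilization invariant to the Euler-class condition (b). I expect the stable step to be the crux: for the homotopy $\CP^n$ of this paper, whose cohomology is torsion-free, $\K(X)$ and $\KO(X)$ are torsion-free and the detection is transparent, whereas for general $X$ the real content of (1) is to neutralize the $(j-1)!$ denominators against the permitted torsion, so that equal Chern classes truly force equality in $\K(X)$.
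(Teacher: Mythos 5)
A preliminary point: the paper does not prove Theorem \ref{cp46thm} at all --- it is quoted verbatim from Corollary 4.2 of \cite{thomas1974almost} --- so there is no internal proof to compare yours against; what follows evaluates your proposal on its own terms. Your outline takes the natural route: rewrite (a) and (b) as $p_i(E_{\R}) = p_i(X)$ and $e(E_{\R}) = e(X)$, deduce a stable isomorphism $E_{\R} \cong TX$ in $\KO(X)$ using injectivity of complexification (a correct use of hypothesis (2), via $r \circ c = 2$), then destabilize. But the two steps carrying all the content are asserted rather than proved. The stable step rests on the claim that under hypothesis (1) an element of $\K(X)$ is determined by its Chern classes; your justification is a heuristic about $(j-1)!$ denominators in the Chern character. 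Turning that into a proof requires a filtration/Atiyah--Hirzebruch argument relating $c_j$ of a class of filtration $2j$ to $(j-1)!$ times its leading cohomology term, and that argument is precisely the technical core of \cite{thomas1974almost}; assuming it is close to assuming the theorem. There is also an error of detail inside this step: $TX \otimes \C$ is not a sum of conjugate line bundles, so its odd Chern classes do not ``vanish identically'' by a Chern-root computation --- conjugation-invariance of a complexified real bundle only gives $2c_{2i+1} = 0$, so these classes are a priori nonzero $2$-torsion. They do vanish here, but only because hypothesis (1) excludes $2$-torsion in $H^{4i+2}(X)$ for $i \geq 1$ (as $(2i)!$ is even) and orientability of $TX$ kills $c_1(TX \otimes \C)$; without noticing this, ``identical Chern classes'' is unjustified.

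The second gap is that the theorem asserts a one-to-one correspondence, and your argument delivers only existence. Theorem \ref{stabunstab} produces \emph{some} complex structure on $TX$ from the stable one; it does not produce an isomorphism $E_{\R} \cong TX$ of oriented bundles identifying $E$ itself with $(TX, J)$ for an almost complex structure $J$, nor does it show that the assignment $J \mapsto (TX, J) \in \Vect_n(X)$ is injective and surjective onto the set cut out by (a) and (b). You acknowledge this and defer it to an obstruction-theoretic enumeration of lifts that you do not carry out. Note that the bijection is not dispensable in this paper: Theorems \ref{libwoodthm} and \ref{accp6thm} use exactly this correspondence to \emph{count} almost complex structures on homotopy $\CP^4$s and $\CP^6$s, so an existence-only version of Theorem \ref{cp46thm} would not support their statements.
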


We also review the classification of homotopy $\CP^n$s for $n=4,5,6$ as computed by Brumfiel in \cite{brumfiel1968differentiable}. Fix notation as follows. For $X = \CP^n$, one has $H^*(X;\Z) \cong \Z[u]/(u^{n+1})$, where $u$ generates $H^2(X;\Z)$. Let $H$ denote the dual of the canonical line bundle over $\CP^n$ so that its first Chern class is $c_1(H) = u$, and let $L = H - 1 \in \K(X)$ and $\omega = r(L)$. Then $K(X) \cong \Z[L]/(L^{n+1})$, and Fujii computed $KO(X)$ in Theorem 2 of \cite{fujii1967projective} as $\Z[\omega]/I$ where $I$ is the ideal $(\omega^{t+1})$ if $n=2t$, $(2\omega^{2t+1}, \omega^{2t+2})$ if $n=4t+1$, and $(\omega^{2t+2})$ if $n=4t+3$. In particular, $KO(\CP^4) = \Z[\omega]/(\omega^3)$, $KO(\CP^5) = \Z[\omega]/(2\omega^3, \omega^4)$, and $KO(\CP^6) = \Z[\omega]/(\omega^4)$.

A combination of 8.2, 8.6, 8.8, 8.10, and 8.11 of \cite{brumfiel1968differentiable} gives

\begin{theorem} \label{brumfielclassif}
Let $X$ be a smooth manifold with the oriented homotopy type of $\CP^n$, $n=4,5,6$. Then $TX = T{\CP^n} + \xi$ in $KO(\CP^n)$, where $\xi$ is a linear combination of generators of $\im{([\CP^n, G/O] \to [\CP^n, BSO])}$ given as follows:

\begin{enumerate}[label=(\roman*)]
    \item If $n=4$, $\xi = m\xi_1 + n\xi_2$ with $4m^2 - 10m - 28n = 0$, where $\xi_1 = 24\omega + 98\omega^2$ and $\xi_2 = 240\omega^2$.
    \item If $n=5$, $\xi = m\xi_1 + n\xi_2$ with $m$ even, where $\xi_1 = 24\omega + 98\omega^2 + \omega^3$ and $\xi_2 = 240\omega^2$.
    \item If $n=6$, $\xi = m\xi_1 + n\xi_2 + q\xi_3$ with $32m^3 - 252m^2 + 301m - 672mn + 1152n + 1488q = 0$, where $\xi_1 = 24\omega + 98\omega^2 + 111\omega^3$, $\xi_2 = 240\omega^2 + 380\omega^3$, and $\xi_3 = 504\omega^3$.
\end{enumerate}
\end{theorem}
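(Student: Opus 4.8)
The plan is to reconstruct Brumfiel's classification from the smooth surgery exact sequence for $\CP^n$,
\[
L_{2n+1}(\Z) \to \mathcal{S}(\CP^n) \xrightarrow{\eta} [\CP^n, G/O] \xrightarrow{\theta} L_{2n}(\Z),
\]
in which $\mathcal{S}(\CP^n)$ is the set of oriented homotopy smoothings and $\eta$ is the normal invariant. A homotopy smoothing $X$ differs from $\CP^n$ in its stable tangent bundle precisely by the image of its normal invariant under the forgetful map $[\CP^n, G/O] \to [\CP^n, BSO] = \KO(\CP^n)$, so the theorem reduces to three tasks: (i) identify $\im([\CP^n, G/O] \to \KO(\CP^n))$, which produces the generators $\xi_i$; (ii) record that an arbitrary element is $m\xi_1 + n\xi_2$ (or $m\xi_1 + n\xi_2 + q\xi_3$); and (iii) compute the surgery obstruction $\theta$, whose vanishing is the condition that a normal invariant be realized by a genuine homotopy equivalence and which yields the constraint equations.

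For (i) I would use the fibration $G/O \to BO \to BG$ to identify $\im([\CP^n,G/O] \to \KO(\CP^n))$ with the kernel of the $J$-homomorphism $J \colon \KO(\CP^n) \to [\CP^n, BG]$, that is, with the stable bundles over $\CP^n$ that are fiber homotopically trivial. Adams's computation of $J(\CP^n)$ through the operations $\psi^k$ and the $e$-invariant determines this kernel explicitly. The leading coefficients $24, 240, 504$ of $\xi_1, \xi_2, \xi_3$ are exactly the orders of the image of $J$ in the relevant degrees, namely the denominators of $B_2/4$, $B_4/8$, and $B_6/12$; the lower-order corrections ($98\omega^2$, $111\omega^3$, $380\omega^3$) appear because $\ker J$ is not spanned by the naive multiples $24\omega$, $240\omega^2$, $504\omega^3$, and I would pin them down by imposing the integrality conditions coming from $\psi^k$ on $\omega$, $\omega^2$, $\omega^3$, together with Fujii's multiplicative relations in $\KO(\CP^n)$ recorded above.

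For (iii) the surgery obstruction controls the constraint equations. When $2n \equiv 0 \bmod 4$ (the cases $n=4,6$), $L_{2n}(\Z) \cong \Z$ is detected by one-eighth the signature, so $\theta$ of a normal invariant is $\tfrac{1}{8}(\text{sign}(M) - \text{sign}(\CP^n))$; since a homotopy equivalence preserves signature, the Hirzebruch signature theorem forces the $L$-genus of $T\CP^n + \xi$ to equal $\text{sign}(\CP^n) = 1$. Writing the $p_i$ of $T\CP^n + \xi$ as polynomials in $(m,n)$ resp. $(m,n,q)$ --- from $p(T\CP^n) = (1+u^2)^{n+1}$ and the Pontrjagin classes of the powers $\omega^k$, computed via $\omega = r(L)$ and $c_1(H) = u$ --- and substituting into $L_2$ (degree two, hence quadratic in $m$) resp. $L_3$ (degree three, hence cubic in $m$) should produce exactly $4m^2 - 10m - 28n = 0$ and $32m^3 - 252m^2 + 301m - 672mn + 1152n + 1488q = 0$. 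When $2n \equiv 2 \bmod 4$ (the case $n=5$), $L_{10}(\Z) \cong \Z/2$ is the Arf--Kervaire invariant; here I would show that $\theta$ is nonzero on the normal invariant carrying $\xi_1$ but vanishes on $\xi_2$, so that realizability forces the coefficient $m$ of $\xi_1$ to be even.

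The main obstacle will be step (i): extracting the generators $\xi_i$ with their nonleading coefficients from the $J$-theory of $\CP^n$. This requires simultaneously tracking the multiplicative structure of $\KO(\CP^n)$, the effect of the Adams operations, and the passage between a $\KO$-class and its Pontrjagin classes, and it is here that the interlocking of the three generators --- and hence the precise shape of the constraint equations --- is determined. By comparison the signature computation in (iii), once the Pontrjagin classes are expressed in the parameters, is a mechanical if lengthy evaluation.
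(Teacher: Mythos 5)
The paper itself offers no proof of this statement: it is imported wholesale from \cite{brumfiel1968differentiable} (results 8.2, 8.6, 8.8, 8.10, 8.11), and Brumfiel's argument there is precisely the surgery-theoretic scheme you outline --- normal invariants, identification of $\im([\CP^n, G/O] \to \KO(\CP^n))$ with the fiber homotopy trivial stable bundles, the signature obstruction for $n$ even and the Kervaire obstruction for $n=5$. Your step (iii) in the even cases is sound and genuinely checkable: for $n=4$, writing $\xi = 24m\omega + (98m+240n)\omega^2$ and using $p_1(\omega)=u^2$, $p_2(\omega^2)=-6u^4$, one gets $p_1(T\CP^4+\xi) = (5+24m)u^2$ and $p_2(T\CP^4+\xi) = (10+288m^2-480m-1440n)u^4$, and the condition $\langle \tfrac{1}{45}(7p_2-p_1^2), [\CP^4]\rangle = 1$ collapses to exactly $32m^2-80m-224n=0$, i.e.\ $4m^2-10m-28n=0$. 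So the skeleton is the right one, and it is the same skeleton as the cited source.

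However, there are genuine gaps at exactly the points that carry the content. First, in step (i) you propose to pin down $\xi_1,\xi_2,\xi_3$ by ``imposing the integrality conditions coming from $\psi^k$.'' Those conditions (the $e$-invariant side of Adams' work) only bound $\ker J$ from above: they identify which classes \emph{can} be fiber homotopy trivial. To conclude that the $\xi_i$ are \emph{generators of the image} of $[\CP^n,G/O] \to \KO(\CP^n)$ you also need the opposite inclusion --- that these specific classes really are fiber homotopy trivial --- which requires either explicit fiber homotopy trivializations or the Adams conjecture ($J'=J''$) in the relevant range; this lower bound is the hard half of Brumfiel's computation and your proposal never engages it, so at best you would show the image is \emph{contained in} the span of candidate classes, not equal to it. Second, in the $n=5$ case the obstruction $\theta$ is a function on $[\CP^5, G/O]$, not on $\KO(\CP^5)$, and the forgetful map to $\KO(\CP^5)$ has nontrivial (torsion) kernel. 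Since the surgery exact sequence produces a homotopy smoothing from \emph{any} lift with $\theta=0$, necessity of ``$m$ even'' requires showing that \emph{every} lift of $m\xi_1+n\xi_2$ with $m$ odd has nonzero Kervaire obstruction, not just one preferred lift; unlike the signature, the Kervaire obstruction does not obviously factor through $\KO$. Moreover, your reduction to ``$\theta\neq 0$ on $\xi_1$, $\theta=0$ on $\xi_2$'' tacitly assumes additivity of the Kervaire obstruction on $[\CP^5,G/O]$, which is not automatic and needs a Rourke--Sullivan type formula in Wu and Kervaire classes. As written, the proposal is a correct road map of the proof the paper cites, but the two computations it defers \emph{are} that proof.
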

It is important to note that the above statement does not completely classify homotopy $\CP^n$s, but it does classify homotopy $\CP^n$s up to torsion in the homotopy structure set of $\CP^n$. The concluding remarks of \cite{brumfiel1968differentiable} explain that these torsion elements correspond to stable tangential homotopy smoothings of $\CP^n$. Thus in light of Theorem \ref{stabunstab} the above information is all that is relevant for solving Problem \ref{cpprob}.

\begin{remark} \label{generalsetup}
We remark on what is required to approach this problem in general for an oriented simply connected $2n$-dimensional manifold $X$. In particular, a complete diffeomorphism classification is not required; one need only consider the $K$-theory classes realized by homotopy equivalences from smooth manifolds in the following sense.

Let $\sigma: [X, G/O] \to L_{2n}(e)$ denote the surgery obstruction (where $\id_X: X \to X$ is used as a basepoint to identify the set of normal invariants with $[X, G/O]$). Let $i: G/O \to BO$ denote the inclusion of the fiber of the natural map $BO \to BG$. Consider the set of stable virtual bundles
\[
\Xi = i_*(\sigma^{-1}(0)) + [TX] \in \KO(X).
\]

Then by Theorem \ref{stabunstab}, the question of which manifolds homotopy equivalent to $X$ admit almost complex structures is equivalent to determining the preimage of $e(TX)$, the Euler class of $X$, under the map
\[
c_n|_{r^{-1}(\Xi)}: r^{-1}(\Xi) \to H^{2n}(X;\Z). 
\]

Our goal is to answer this question for $\CP^n$, $4 \leq n \leq 6$, using Brumfiel's computations.
\end{remark}

\section{Chern Vectors} \label{chernvects}

In order to make use of Theorem \ref{cp46thm} we will need to know all possible Chern vectors of bundles over $\CP^n$. This is already known by Theorem A of \cite{thomas1974almost}, but we will obtain an equivalent result that makes finding solutions to conditions (a) and (b) of Theorem \ref{cp46thm} a linear algebra problem.

We begin by characterizing the image of the Chern character $\ch: K(\CP^n) \to H^*(\CP^n; \Q)$. Up to stable equivalence, every bundle over $\CP^n$ splits as a direct sum of line bundles, and $\ch(E) = \exp{(c_1(E))}$ for a line bundle $E$. Since $H^*(\CP^n;\Z)$ is torsion-free, $\im(\ch)$ is a sublattice of $\Z \oplus \Z u \oplus \Z \frac{u^2}{2} \oplus \cdots \oplus \Z \frac{u^n}{n!} \subseteq \Q^{n+1}$ generated by $\Z$-linear combinations of $q_m = 1 + mu + m^2\frac{u^2}{2} + \cdots + m^n\frac{u^n}{n!} = \ch(L_m)$ where $L_m$ is a complex line bundle over $\CP^n$ with first Chern class $mu$. We will prove the following.

\begin{theorem} \label{chernchar}
$\{q_0, q_1, \ldots, q_n\}$ forms a basis for $\im(\ch)$.
\end{theorem}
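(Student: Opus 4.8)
The plan is to reduce the statement to two standard facts: that the Chern character is injective on $K(\CP^n)$, and that $\{H^0, H^1, \ldots, H^n\}$ is a $\Z$-basis of $K(\CP^n)$. Since $q_m = \ch(H^{\otimes m})$ and $\im(\ch)$ is generated by the classes $q_m$ (every element of $K(\CP^n)$ being a $\Z$-linear combination of line bundles, as noted above), it then suffices to exhibit $n+1$ of these generators that are simultaneously independent and spanning, and the images of a basis under an injective homomorphism are exactly such a collection.

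First I would record the injectivity of $\ch$. Because $K(\CP^n) \cong \Z[L]/(L^{n+1})$ is free abelian of rank $n+1$, the canonical map $K(\CP^n) \to K(\CP^n) \otimes \Q$ is injective; composing with the classical rational isomorphism $\ch \otimes \Q \colon K(\CP^n)\otimes\Q \xrightarrow{\sim} H^{*}(\CP^n;\Q)$ shows that $\ch$ itself is injective. Consequently $\ch$ carries any $\Z$-basis of $K(\CP^n)$ to a $\Z$-basis of $\im(\ch)$.

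Next I would verify that $\{H^0, \ldots, H^n\}$ is a basis of $K(\CP^n)$. Writing $H^j = (1+L)^j = \sum_{k} \binom{j}{k} L^k$, the transition matrix from the standard basis $\{1, L, \ldots, L^n\}$ to $\{H^0, \ldots, H^n\}$ is unitriangular with integer entries, hence invertible over $\Z$. Applying the injective map $\ch$ then shows that $\{q_0, \ldots, q_n\} = \{\ch(H^0), \ldots, \ch(H^n)\}$ is a $\Z$-basis of $\im(\ch)$, which is the assertion. Linear independence can also be seen directly: in the coordinates $\{1, u, u^2/2, \ldots, u^n/n!\}$ the vector $q_m$ is $(1, m, m^2, \ldots, m^n)$, so $q_0, \ldots, q_n$ form a Vandermonde matrix with distinct nodes $0, 1, \ldots, n$ and nonzero determinant.

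The conceptual route above involves no real obstacle beyond those two standard inputs. If instead one argues purely in terms of the generators $q_m$ — perhaps more in keeping with the explicit $K$-theory picture used later — then the genuine difficulty is the spanning direction: showing that an arbitrary $q_m$ for $m \in \Z$ (in particular for $|m| > n$) lies in the \emph{integral} span of $q_0, \ldots, q_n$. Lagrange interpolation against the nodes $0, \ldots, n$ gives $q_m = \sum_{j=0}^n \ell_j(m)\, q_j$ with $\ell_j$ the Lagrange basis polynomials, and one must check that each coefficient $\ell_j(m)$ is an integer. This is the crux, and it follows from the factorization $\ell_j(x) = (-1)^{n-j}\binom{x}{j}\binom{x-j-1}{n-j}$ as a product of integer-valued binomial polynomials; the injectivity argument sidesteps this integrality check entirely.
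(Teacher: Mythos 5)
Your proof is correct, and it takes a genuinely different route from the paper's. The paper works entirely with the generators $q_m$: it reduces the basis claim to showing that the Vandermonde system $Wa=b(m)$ has an integral solution for every integer $m$, inverts $W$ via the explicit Vandermonde-inverse formula in terms of elementary symmetric polynomials (Lemma \ref{chints1}), and then verifies integrality of the resulting coefficients by binomial-coefficient identities, split into the three cases $m>0$, $m=0$, $m<0$ (Lemma \ref{chints2}). Your main argument bypasses the integrality computation entirely: injectivity of $\ch$ (freeness of $K(\CP^n)$ plus the rational isomorphism $\ch\otimes\Q$) reduces the theorem to the fact that $\{H^0,\dots,H^n\}$ is a $\Z$-basis of $K(\CP^n)$, which is immediate from the unitriangular transition matrix $H^j=\sum_k\binom{j}{k}L^k$, since a monomorphism carries a basis to a basis of its image. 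What your route buys is brevity and an explanation of \emph{why} integrality is automatic; what it costs is one external input (the rational isomorphism of the Chern character --- though for $\CP^n$ one can verify injectivity directly, as $\ch(L^k)=u^k+\text{higher terms}$ is triangular), whereas the paper's argument is self-contained linear algebra and number theory, in the same explicit style as the criterion $Q^{-1}C\in\Z^n$ of Corollary \ref{linalgforCP} that drives the later sections; note that nothing downstream reuses the explicit formula for $w_k$, so nothing is lost by your substitution. Your closing observation is also exactly on target: the $w_k$ of Lemma \ref{chints1} are precisely Lagrange interpolation coefficients, and your factorization $\ell_j(x)=(-1)^{n-j}\binom{x}{j}\binom{x-j-1}{n-j}$ into integer-valued binomial polynomials proves the integrality of Lemma \ref{chints2} uniformly in $m$ --- the identity $\binom{-q}{j}=(-1)^j\binom{q+j-1}{j}$ absorbs the paper's separate treatment of negative $m$ --- so even your fallback route streamlines the published argument.
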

\begin{proof}
Let $\beta = \{q_0, q_1, \ldots, q_n\}$. The sum $a_0q_0 + a_1q_1 + \cdots + a_{n}q_n$ equals $(a_0 + a_1 + \cdots + a_{n}) + (a_1 + 2a_2 + 3a_3 + \cdots + na_{n})u + (a_1+ 4a_2+9a_3 + \cdots + n^2a_{n})\frac{u^2}{2} + \cdots + (a_1 + 2^na_2 + 3^na_3 + \cdots + n^na_n)\frac{u^n}{n!}$.

Checking the linear independence and spanning conditions amounts to showing that the system of equations
\[
    \begin{cases}
    a_0 + a_1 + a_2 + \cdots + a_n = 1 \\
    a_1 + 2a_2 + 3a_3 + \cdots + na_n = m \\
    a_1 + 4a_2 + 9a_3 + \cdots + n^2a_n = m^2 \\
    \hspace{45mm} \vdots \\
    a_1 + 2^na_2 + 3^na_3 + \cdots + n^na_n = m^n \\
    \end{cases}
\]
has a unique solution for an arbitrary integer $m$.

We can write this as a matrix equation $Wa=b$ where $a$ is the column vector consisting of the $a_i$s, $b=b(m)=\begin{pmatrix} 1 \\ m \\ m^2 \\ \vdots \\ m^n \end{pmatrix}$, and $W$ is the matrix
\[
W = \begin{pmatrix}
1 & 1 & 1 & 1 & \cdots & 1 \\
0 & 1 & 2 & 3 & \cdots & n \\
0 & 1 & 4 & 9 & \cdots & n^2 \\
\vdots & & & & \ddots & \vdots\\
0 & 1 & 2^n & 3^n & \cdots & n^n
\end{pmatrix}.
\]

$W$ is a (transpose of a) Vandermonde matrix and hence has determinant $\det W = \Pi_{j=1}^n j!$, so $W$ is invertible over $\Q$ and hence $\ker W = 0$. Thus it remains to verify that $W^{-1}b$ has integers in each entry independent of the value of $m$.

In order to do computations with $n \times n$ matrices instead of $(n+1) \times (n+1)$ matrices, replace $n$ by $n-1$. Then $W = V^t$ where

\[
V = \begin{pmatrix}
    1 & 0 & 0 & \cdots & 0 \\
    1 & 1 & 1 & \cdots & 1 \\
    1 & 2 & 4 & \cdots & 2^{n-1} \\
    \vdots & \vdots & \vdots & \ddots & \vdots \\
    1 & n-1 & (n-1)^2 & \cdots & (n-1)^{n-1} \\
    \end{pmatrix} =
    \begin{pmatrix}
    1 & x_1 & x_1^2 & \cdots & x_1^{n-1} \\
    1 & x_2 & x_2^2 & \cdots & x_2^{n-1} \\
    1 & x_3 & x_3^2 & \cdots & x_3^{n-1} \\
    \vdots & \vdots & \vdots & \ddots & \vdots \\
    1 & x_n & x_n^2 & \cdots & x_n^{n-1} \\
    \end{pmatrix}
\]
with $x_i = i-1$ for $1 \leq i \leq n$. The formula for the inverse of a Vandermonde matrix given in \cite{klinger1967vandermonde} gives $V^{-1} = (c_{ij})$ with

$$c_{ij} = \frac{(-1)^{n-i}\sigma_{n-i}(x_1,\ldots,\widehat{x_j},\ldots,x_n)}{\prod_{l=1,l\neq j}^n (x_j - x_l)}$$

where $\sigma_q$ is the $q$th elementary symmetric polynomial $$\sigma_q(y_1,\ldots,y_k) = \sum_{1 \leq j_1 < j_2 < \cdots < j_q \leq k}y_{j_1}y_{j_2}\cdots y_{j_q} \text{ for } q=0,1,\ldots,k.$$

Then $W^{-1} = (V^{-1})^t$, and the integrality of entries in $W^{-1}b$ will follow from the following two lemmas.
\end{proof}

\begin{lemma} \label{chints1}
The column vector $W^{-1}b(m)$ has entry in row $k$ equal to $$w_k = \frac{(-1)^{n-k}}{(n-1)!}{n-1 \choose k-1} \prod_{j=0, j\neq k-1}^{n-1} (m-j).$$
\end{lemma}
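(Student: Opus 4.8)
The plan is to compute the entries of $W^{-1}b(m)$ directly from the given Vandermonde inverse formula and to recognize them as Lagrange interpolation basis polynomials. Since $W^{-1} = (V^{-1})^t$, the $(k,j)$ entry of $W^{-1}$ is $c_{jk}$, so the $k$th entry of $W^{-1}b(m)$ is
\[
(W^{-1}b(m))_k = \sum_{j=1}^n c_{jk}\, m^{j-1} = \frac{1}{\prod_{l=1,\, l\neq k}^n (x_k - x_l)} \sum_{j=1}^n (-1)^{n-j}\sigma_{n-j}(x_1,\ldots,\widehat{x_k},\ldots,x_n)\, m^{j-1},
\]
where in the second step I have pulled out the denominator, which is independent of $j$, from the sum.

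The key step is to identify the remaining sum. Reindexing by $q = n-j$ and using the elementary identity $\prod_{i=1}^{n-1}(m - y_i) = \sum_{q=0}^{n-1} (-1)^q \sigma_q(y_1,\ldots,y_{n-1})\, m^{\,n-1-q}$, applied to the $n-1$ nodes $y_i = x_1,\ldots,\widehat{x_k},\ldots,x_n$, the sum collapses to $\prod_{l\neq k}(m - x_l)$. Thus $(W^{-1}b(m))_k$ equals the Lagrange basis polynomial $\ell_k(m) = \prod_{l\neq k}(m-x_l)\big/\prod_{l\neq k}(x_k-x_l)$, which is manifestly a polynomial in $m$. This is the conceptual heart of the argument and explains structurally why the entries are polynomial in $m$ of the stated shape.

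It then remains to substitute the explicit nodes $x_l = l - 1$ and simplify. The numerator becomes $\prod_{j=0,\, j\neq k-1}^{n-1}(m - j)$. For the denominator I would split the product $\prod_{l\neq k}\big((k-1)-(l-1)\big) = \prod_{l\neq k}(k-l)$ into the factors with $l<k$, which contribute $(k-1)!$, and those with $l>k$, which contribute $(-1)^{n-k}(n-k)!$, giving $(-1)^{n-k}(k-1)!(n-k)!$ in total. Dividing and using $1/\big((k-1)!(n-k)!\big) = \binom{n-1}{k-1}\big/(n-1)!$ yields exactly the stated formula for $w_k$, after noting that $(-1)^{n-k}$ is its own reciprocal.

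The computations are entirely routine once the Lagrange-polynomial identity is spotted; the only points demanding care are the bookkeeping of the transpose, so that the $j$-independent denominator is correctly the one indexed by column $k$, and the sign arising from splitting the denominator product, which is the source of the $(-1)^{n-k}$ factor. I do not anticipate a genuine obstacle here, as the real content of the lemma is the reduction to $\ell_k(m)$; the subsequent simplification is bookkeeping.
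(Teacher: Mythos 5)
Your proposal is correct and takes essentially the same route as the paper: both start from the Vandermonde inverse formula, compute the $j$-independent denominator $\prod_{l\neq k}(k-l) = (-1)^{n-k}(k-1)!(n-k)!$, and identify the signed elementary-symmetric-polynomial sum with $\prod_{l\neq k}(m-x_l)$. The only presentational difference is that the paper verifies this last identity coefficient-by-coefficient by expanding the product, whereas you invoke it directly (with the Lagrange-basis gloss); this is the same mathematical content.
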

\begin{proof}
The entry of $W^{-1}b$ in row $k$ is given by $\sum_{i=1}^n(W^{-1})_{ki}m^{i-1} = \sum_{i=1}^n c_{ik}m^{i-1}$, so we wish to show that
\[
\sum_{i=1}^n \frac{(-1)^{n-i}\sigma_{n-i}(x_1,\ldots,\widehat{x_k},\ldots,x_n)}{\prod_{l=1,l\neq k}^n (k-l)}m^{i-1} = \frac{(-1)^{n-k}}{(n-1)!}{n-1 \choose k-1} \prod_{j=0, j\neq k-1}^{n-1} (m-j).
\]
It suffices to show that the coefficient of $m^{i-1}$ on the right-hand side is equal to $\frac{(-1)^{n-i}\sigma_{n-i}(x_1,\ldots,\widehat{x_k},\ldots,x_n)}{\prod_{l=1,l\neq k}^n (k-l)}$. We have
\[
\frac{1}{\prod_{l=1,l\neq k}^n (k-l)} = \frac{(-1)^{k-n}}{(k-1)!(n-k)!} = \frac{(-1)^{n-k}}{(n-1)!}{n-1 \choose k-1},
\]
hence it remains to show that the coefficient of $m^{i-1}$ in $\prod_{j=0, j\neq k-1}^{n-1} (m-j)$ equals $(-1)^{n-i}\sigma_{n-i}(x_1,\ldots,\widehat{x_k},\ldots,x_n) = (-1)^{n-i}\sigma_{n-i}(0,1,\ldots,\widehat{k-1},\ldots,n-1)$. Indeed, $x_p  = p-1$ and
\[
\prod_{j=0, j\neq k-1}^{n-1} (m-j) = [m(m-1)\cdots(m-(k-2))][(m-k)(m-(k+1))\cdots(m-(n-1))]
\]
which is a product of $n-1$ factors; if a term in the expansion of this product has exactly $i-1$ copies of $m$ then its coefficient is a product of $n-1-(i-1)=n-i$ distinct constants from the list $0,1,\ldots,k-2,k,k+1,\ldots,n-1$, with sign easily seen to be $(-1)^{n-i}$. For each unique such choice of $n-i$ constants from this list there is a single term in the expansion with degree $i-1$, giving the desired result.
\end{proof}

\begin{lemma}\label{chints2}
$w_k$ is an integer for all $1 \leq k \leq n$.
\end{lemma}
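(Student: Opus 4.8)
The plan is to show that the apparently rational coefficient in Lemma \ref{chints1} is forced to be $\pm 1$, after which $w_k$ is displayed as a product of ordinary binomial coefficients of integers and is therefore visibly an integer. The guiding observation is that the truncated product $\prod_{j=0,\,j\neq k-1}^{n-1}(m-j)$ is a product of two \emph{runs of consecutive integers}, and each such run, divided by the factorial of its length, is a (generalized) binomial coefficient.

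First I would split the product at the omitted index $k-1$, exactly as in the factorization already written in the proof of Lemma \ref{chints1}:
\[
\prod_{j=0,\,j\neq k-1}^{n-1}(m-j) = \Big[\prod_{j=0}^{k-2}(m-j)\Big]\Big[\prod_{j=k}^{n-1}(m-j)\Big].
\]
The first bracket is a product of $k-1$ consecutive integers, so it equals $(k-1)!\binom{m}{k-1}$; the second is a product of $n-k$ consecutive integers, so it equals $(n-k)!\binom{m-k}{n-k}$. Here $\binom{x}{j}$ denotes the falling-factorial binomial coefficient $x(x-1)\cdots(x-j+1)/j!$, which is an integer for every integer $x$ and every $j\ge 0$. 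Next I would substitute into the formula for $w_k$ and use $\binom{n-1}{k-1}=\tfrac{(n-1)!}{(k-1)!(n-k)!}$; all the factorials cancel, since
\[
\frac{1}{(n-1)!}\binom{n-1}{k-1}(k-1)!\,(n-k)! = 1,
\]
leaving $w_k = (-1)^{n-k}\binom{m}{k-1}\binom{m-k}{n-k}$, a product of integers.

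The main thing to be careful about, rather than a genuine obstacle, is twofold. One must check the integrality of the generalized binomial coefficients $\binom{m}{k-1}$ and $\binom{m-k}{n-k}$ when $m$ is negative (this follows because any run of $j$ consecutive integers is divisible by $j!$, regardless of sign), and one must handle the boundary cases $k=1$ and $k=n$, where one of the two brackets is an empty product equal to $1$ and the corresponding binomial coefficient degenerates to $\binom{x}{0}=1$. With these points recorded, the cancellation identity holds for every integer $m$ and the conclusion $w_k\in\Z$ for all $1\le k\le n$ follows immediately.
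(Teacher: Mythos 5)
Your proposal is correct and follows essentially the same route as the paper: the identical splitting of the truncated product $\prod_{j=0,\,j\neq k-1}^{n-1}(m-j)$ into two runs of consecutive integers, followed by the same cancellation of $(k-1)!\,(n-k)!$ against $\frac{1}{(n-1)!}\binom{n-1}{k-1}$ to leave a product of two binomial coefficients. The only difference is cosmetic: where the paper verifies integrality of those coefficients by separate case analysis for $m>0$, $m=0$, and $m<0$, you invoke the standard fact that any product of $j$ consecutive integers is divisible by $j!$ (equivalently, that falling-factorial binomial coefficients $\binom{x}{j}$ are integers for every integer $x$), which packages the paper's three cases into a single citation.
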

\begin{proof}
Write
\[
\prod_{j=0, j\neq k-1}^{n-1} (m-j) = [m(m-1)\cdots(m-(k-2))][(m-k)(m-(k+1))\cdots(m-(n-1))].
\]
Consider first the case that $m>0$. Note that the first grouping $m(m-1)\cdots(m-(k-2))$ equals ${m \choose m-k+1}(k-1)!$ and the second grouping $(m-k)(m-(k+1))\cdots(m-(n-1))$ equals ${m-k \choose m-n}(n-k)!$, so we have
\begin{align*}
w_k &= (-1)^{n-k}\frac{1}{(n-1)!}{n-1 \choose k-1}{m \choose m-k+1}(k-1)!{m-k \choose m-n}(n-k)! \\
    &= (-1)^{n-k}\frac{1}{(n-1)!}\frac{(n-1)!}{(k-1)!(n-k)!}{m\choose m-k+1}{m-k \choose m-n}(k-1)!(n-k)! \\
    &= (-1)^{n-k}{m \choose m-k+1}{m-k \choose m-n}
\end{align*}
which as a product of binomial coefficients is an integer.

In the case $m=0$, if $k \neq 1$ then $w_k=0$, and if $k=1$ then $\prod_{j=0, j\neq k-1}^{n-1} (m-j) = \prod_{j=1}^{n-1} (-j) = (-1)^{n-1}(n-1)!$, hence $w_1 \in \Z$ if $m=0$.

In case $m < 0$, binomial coefficients with negative entries do not quite make sense, but we can handle the product similarly. Write $m=-q$ for $q>0$, so that $\prod_{j=0, j\neq k-1}^{n-1} (-q-j)$ equals
\begin{align*}
& [(-q)(-q-1)\cdots(-q-(k-2))][(-q-k)(-q-(k+1))\cdots(-q-(n-1))] \\
    &= (-1)^{n-1}[(q+k-2)\cdots(q+1)(q)][(q+n-1)\cdots(q+k+1)(q+k)] \\
    &= (-1)^{n-1}{q+k-2 \choose q-1}(k-1)!{q+n-1 \choose q+k-1}(n-k)!,
\end{align*}
which gives
\begin{align*}
    w_k &= (-1)^{n-k}\frac{1}{(n-1)!}{n-1 \choose k-1}(-1)^{n-1}(k-1)!(n-k)!{q+k-2 \choose q-1}{q+n-1 \choose q+k-1} \\
        &= (-1)^{2n-k-1}{q+k-2 \choose q-1}{q+n-1 \choose q+k-1}
\end{align*}
which is again an integer.
\end{proof}

If we consider an arbitrary $E \in \K(X)$, we can write $\ch(E) = \Sigma_{i\geq 1}\ch_{2i}(E)u^i$ so that $\ch_{2i}(E)$ is the coefficient of $u^i$ in $\ch(E)$. Let $C(E)$ be the column vector whose entry in the $i$th position is $i!\ch_{2i}(E)$. By Theorem \ref{chernchar}, there are integers $(a_1, \ldots, a_n)$ such that $\ch(E) = \Sigma_{k \geq 1} a_kq_k$. Equating these two expressions of $\ch(E)$, we obtain $C(E) = Qa$ where $a$ is the column vector consisting of the $a_k$ and $Q$ is the $n \times n$ matrix with $Q_{ij} = j^i$ ($Q$ is essentially the matrix $W$ from the proof of Theorem \ref{chernchar} with the degree-zero portion ignored).

We can use this to characterize Chern vectors of stable bundles over $\CP^n$ as follows. By the splitting principle, $i!\ch_{2i}(E)u^i$ is a $\Z$-linear combination of elementary symmetric polynomials in the Chern classes $c_1(E), \ldots, c_i(E)$. Thus the entries of $C(E)$ can be expressed in terms of the coefficients of the Chern classes; write $C(c_1, \ldots, c_n)$ for the expression of $C(E)$ in this way. If we are given an $n$-tuple of integers $(c_1, \ldots, c_n)$, these are a Chern vector (the coefficients of Chern classes $c_k(E) = c_k u^k$) of some element $E \in \K(X)$ if and only if $C(c_1, \ldots, c_n) = Qa$ is solvable for $a$; that is, if and only if $Q^{-1}C(c_1, \ldots, c_n)$ consists solely of integer entries. This observation together with Theorem \ref{stabunstab} amounts to

\begin{corollary} \label{linalgforCP}
An $n$-tuple $(c_1, \ldots, c_n)$ is the Chern vector of a complex structure on an oriented real vector bundle $\xi$ of rank $2n$ over $\CP^n$ if and only if $Q^{-1}C(c_1, \ldots, c_n)$ lies in $\Z^n$ and $c_n u^n = e(\xi).$
\end{corollary}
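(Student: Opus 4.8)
The plan is to assemble Corollary \ref{linalgforCP} directly from the machinery already developed, treating it as a bookkeeping consequence of Theorem \ref{chernchar} (via the matrix $Q$) and the unstable-to-stable criterion of Theorem \ref{stabunstab}. The corollary has two conjuncts on each side of the biconditional, and the cleanest route is to separate the \emph{stable existence} question (handled by the linear-algebra condition on $Q^{-1}C$) from the \emph{unstable upgrade} question (handled by the Euler-class condition $c_n u^n = e(\xi)$), then recombine them.

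First I would establish the stable half: an integer $n$-tuple $(c_1,\dots,c_n)$ arises as the Chern vector of \emph{some} $E \in \K(X)$ if and only if $Q^{-1}C(c_1,\dots,c_n) \in \Z^n$. The forward direction is immediate, since by Theorem \ref{chernchar} any $\ch(E)$ is an integer combination $\sum a_k q_k$, which forces $C(E) = Qa$ and hence $a = Q^{-1}C(E) \in \Z^n$ (note $Q$ is invertible over $\Q$, being $W$ with the degree-zero row and column deleted, so it inherits the nonzero Vandermonde determinant). Conversely, if $Q^{-1}C(c_1,\dots,c_n) = a \in \Z^n$, then $\sum a_k q_k$ is an element of $\im(\ch)$ whose degree-$2i$ part matches $i!\,\ch_{2i}$ built from the prescribed Chern classes; since $\ch$ is a rational-cohomology isomorphism and $\im(\ch)$ is realized by honest $K$-theory classes, this produces an $E \in \K(X)$ with the prescribed Chern data. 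The key point to verify carefully here is that matching the vector $C$ of scaled Chern-character components genuinely pins down the Chern classes themselves — this uses the splitting-principle translation $C = C(c_1,\dots,c_n)$ recorded just before the corollary, namely that the entries of $C$ are precisely the integer combinations of elementary symmetric functions of the $c_k$, so equality of $C$-vectors is equivalent to equality of Chern vectors.

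Second I would invoke Theorem \ref{stabunstab} with $\xi$ the given oriented rank-$2n$ bundle over $\CP^n$: since $\dim \CP^n = 2n$ and $H^{2n}(\CP^n;\Z) \cong \Z$ is torsion-free, its hypotheses hold, so $\xi$ admits a genuine complex structure if and only if it admits a stable complex structure $\omega$ with $c_n(\omega) = e(\xi)$. Translating, $(c_1,\dots,c_n)$ is the Chern vector of a complex structure on $\xi$ exactly when there is a stable class realizing these Chern classes (the stable half above) \emph{and} the top Chern class $c_n u^n$ equals $e(\xi)$. Conjoining the two conditions yields precisely the statement of the corollary.

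The main obstacle I anticipate is not any single hard computation but rather the matching argument of the first step: one must be sure that prescribing $Q^{-1}C \in \Z^n$ controls the \emph{complex} structure on $\xi$ and not merely some abstract $K$-theory class, and in particular that the stable complex structure produced has its underlying real bundle stably isomorphic to $\xi$. This is where Theorem \ref{stabunstab} does the real work — it guarantees that once the top Chern class is correct, the stable data integrates to an unstable complex structure on $\xi$ itself rather than on $\xi \oplus \epsilon^k$. The remaining subtlety is purely one of conventions: keeping the sign and factorial normalizations in $C(E)_i = i!\,\ch_{2i}(E)$ consistent between the definition of $Q$ and the splitting-principle expansion $C(c_1,\dots,c_n)$, so that solvability of $Qa = C$ over $\Z$ lines up exactly with the integrality of $Q^{-1}C$.
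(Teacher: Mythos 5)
Your proposal is correct and follows essentially the same route as the paper: the paper likewise obtains the stable characterization from Theorem \ref{chernchar} via the matrix equation $C = Qa$ together with the splitting-principle identification $C(E) = C(c_1,\ldots,c_n)$, and then cites Theorem \ref{stabunstab} to pass from the stable statement to the unstable one. The only difference is one of explicitness --- you spell out the verification of the hypotheses of Theorem \ref{stabunstab} and the point that equality of $C$-vectors pins down the Chern vector itself, both of which the paper leaves implicit in the paragraph preceding the corollary.
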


To solve our problem of interest we will be setting $n=4$ and $n=6$, so we record the specific matrices $Q$ and $C$ below.

When $n=4$, we have

\begin{align*}
    C(c_1, \ldots, c_4) = \begin{pmatrix}
c_1 \\
c_1^2 - 2c_2 \\
c_1^3 - 3c_1c_2 + 3c_3 \\
c_1^4 - 4c_1^2c_2 + 4c_1c_3 + 2c_2^2 - 4c_4
\end{pmatrix},
&&
Q = \begin{pmatrix}
1 & 2 & 3 & 4 \\
1 & 2^2 & 3^2 & 4^2 \\
1 & 2^3 & 3^3 & 4^3 \\
1 & 2^4 & 3^4 & 4^4
\end{pmatrix},
\end{align*}

and when $n=6$, $C(c_1, \ldots, c_6)$ is given by
\[
\begin{pmatrix}
c_1 \\
c_1^2 - 2c_2 \\
c_1^3 - 3c_1c_2 + 3c_3 \\
c_1^4 - 4c_1^2c_2 + 4c_1c_3 + 2c_2^2 - 4c_4 \\
c_1^5 + 5c_1c_2^2 + 5c_1^2c_3 - 5c_1^3c_2 - 5c_1c_4 - 5c_2c_3 + 5c_5 \\
\scriptstyle{c_1^6 - 2c_2^3 + 3c_3^2 + 6c_1c_5 - 6c_1^2c_4 + 6c_1^3c_3 - 6c_1^4c_2 + 9c_1^2c_2^2 - 12c_1c_2c_3 + 6c_2c_4 - 6c_6}
\end{pmatrix}
\]

and

\[
Q = \begin{pmatrix}
1 & 2 & 3 & 4 & 5 & 6 \\
1 & 2^2 & 3^2 & 4^2 & 5^2 & 6^2 \\
\vdots & \vdots & \vdots & \vdots & \vdots & \vdots \\
1 & 2^6 & 3^6 & 4^6 & 5^6 & 6^6
\end{pmatrix}.
\]

\section{Almost Complex Structures on Homotopy $\CP^4$s} \label{accp4s}

Here we obtain a new proof of a result of Libgober and Wood characterizing almost complex structures on homotopy $\CP^4$s. The result is

\begin{theorem}[\cite{libgoberwood1990kahler}, Theorem 7.1] \label{libwoodthm}
Each smooth manifold $X$ homotopy equivalent to $\CP^4$
supports a (nonzero) finite number of almost complex structures. The Pontrjagin class $p_1(X) = (5 + 24m)u^2$ for some $m \equiv 0 \text{ or } 6 \pmod{14}$. Almost complex structures on $X$ correspond to integers $a$ dividing $25 + \frac{3}{7}(24^2 m^2 + 10\cdot 24m)$ under the correspondence $c_1(X) = au$.
\end{theorem}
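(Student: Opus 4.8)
The plan is to apply Theorem \ref{cp46thm} directly, since both hypotheses hold for a homotopy $\CP^4$: its cohomology is torsion-free, and $\KO(\CP^4) = \Z[\omega]/(\omega^3)$ has no $2$-torsion. Thus almost complex structures on $X$ correspond bijectively to rank-$4$ complex bundles $E$ with Chern coefficients $(c_1, c_2, c_3, c_4)$ (writing $c_i(E) = c_i u^i$) satisfying conditions (a) and (b). First I would make these conditions explicit. Expanding $c(E \oplus \bar E) = c(E)c(\bar E)$ shows that (b) forces $c_4 = 5 = e(X)/u^4$, while the $u^2$ and $u^4$ parts of (a) read $c_1^2 - 2c_2 = p_1(X)/u^2$ and $c_2^2 - 2c_1 c_3 + 2c_4 = p_2(X)/u^4$. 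So everything reduces to computing the Pontrjagin classes of $X$ and then solving these equations in integers subject to the realizability criterion of Corollary \ref{linalgforCP}.

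Next I would compute $p_1(X)$ and $p_2(X)$ from Brumfiel's classification (Theorem \ref{brumfielclassif}(i)), writing $TX = T\CP^4 + \xi$ with $\xi = 24m\,\omega + (98m + 240n)\omega^2$. The key inputs are $\ch(c\omega) = e^u + e^{-u} - 2 = u^2 + \tfrac{1}{12}u^4$ and $\ch(c\omega^2) = u^4$, obtained from $c \circ r = 1 + t$ and $\ch(L) = e^u - 1$. Since $c(TX) = c(T\CP^4) + c(\xi)$ is a complexification, its odd Chern classes vanish and $p_i(X) = (-1)^i c_{2i}(c\,TX)$; reading off the $u^2$ and $u^4$ components of the total Chern character yields $p_1(X) = (5 + 24m)u^2$ and, after substituting the constraint $4m^2 - 10m - 28n = 0$ to eliminate $n$, $p_2(X) = \bigl(10 + \tfrac{1}{7}(24^2 m^2 + 10\cdot 24 m)\bigr)u^4$. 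The congruence $m \equiv 0$ or $6 \pmod{14}$ I would derive separately: the constraint gives $m(2m-5) \equiv 0 \pmod{14}$, and a Chinese Remainder Theorem analysis modulo $2$ and modulo $7$ produces exactly these two residues.

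With the Pontrjagin classes in hand I would solve the system. Condition (a) in degree $2$ gives $c_2 = \tfrac{1}{2}(c_1^2 - 5 - 24m)$, an integer precisely when $c_1$ is odd; writing $a = c_1$ this is the correspondence $c_1(X) = au$. Condition (a) in degree $4$ then determines $c_3$ via $2a\,c_3 = c_2^2 - P$, where $P = \tfrac{1}{7}(24^2 m^2 + 10 \cdot 24 m)$. Using the identity $(5 + 24m)^2 = 25 + 7P$, reduction of $c_2^2 - P$ modulo $a$ collapses to $25 + 3P \equiv 0 \pmod a$, so integrality of $c_3$ is equivalent to $a \mid 25 + \tfrac{3}{7}(24^2 m^2 + 10\cdot 24m)$; a parity check (using $m$ even, hence $P$ even and $25 + 3P$ odd) confirms that the required factor of $2$ in the denominator is harmless and that all such divisors $a$ are automatically odd. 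Finiteness and nonvanishing of the count then follow because $25 + 3P \geq 25 > 0$ has finitely many divisors, among them $\pm 1$.

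The step I expect to be the main obstacle is verifying that the divisibility condition is genuinely \emph{sufficient}, not merely necessary: I must confirm that for each admissible $a$ the full integral Chern vector $(a, c_2, c_3, 5)$ actually arises from a bundle, i.e. that $Q^{-1}C(a, c_2, c_3, 5) \in \Z^4$ in the sense of Corollary \ref{linalgforCP}, rather than only checking that $c_3$ is an integer. Carrying out this lattice computation — and checking that it imposes no constraint beyond the divisibility already found — is the delicate part, and it is also where I would double-check the combinatorial identities for the Chern-character expansion used to pin down $p_2(X)$.
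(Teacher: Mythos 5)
Your framework coincides with the paper's: Theorem \ref{cp46thm} plus Brumfiel's classification plus Corollary \ref{linalgforCP}. Your necessity computations are correct and reproduce the paper's data: the Chern--character route to $p_1(X)=(5+24m)u^2$ and $p_2(X)=\bigl(10+\tfrac17(24^2m^2+10\cdot24m)\bigr)u^4$ (the paper instead states these directly, computing via the product formula in the $\CP^6$ section, but your method is valid), the congruence $m\equiv 0,6\pmod{14}$, and the reduction of integrality of $c_2,c_3$ to ``$a$ odd and $a\mid 25+3P$'' via the identity $(5+24m)^2=25+7P$, where $P=\tfrac17(24^2m^2+10\cdot24m)$.

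However, the step you defer at the end is not a routine check to be ``double-checked''; it is the entire content of the paper's proof (Proposition \ref{cp4polysprop}), and without it your argument proves strictly less than the theorem. What you have established is one direction only: every almost complex structure yields a divisor $a$ of $25+3P$. You have not shown that every such $a$ yields an almost complex structure, because integrality of the tuple $(a,c_2,c_3,5)$ does not imply it is the Chern vector of an actual bundle: the image of the Chern character is a proper sublattice of the integral vectors, which is precisely why Corollary \ref{linalgforCP} demands $Q^{-1}C\in\Z^4$ rather than merely $C\in\Z^4$. In particular the ``(nonzero)'' clause of the theorem --- that $X$ admits at least one almost complex structure, e.g.\ via $a=1$ --- lives entirely in this unproved sufficiency direction. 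The paper settles it by explicitly computing the four rational functions constituting $Q^{-1}C$, isolating the common constant term $f=-3(1728m^2+720m+175)$, disposing of the powers of $2$ in the denominators using $a$ odd, and then using $m\equiv 0,6\pmod{14}$ together with Fermat's little theorem to show that the mod $3$ and mod $7$ conditions hold automatically, leaving $a\mid f/21=-(25+3P)$ as the only surviving constraint. That this lattice computation imposes nothing beyond your necessary condition is a conclusion of that computation, not something that can be anticipated in advance; as your proposal stands, this gap is exactly where the theorem remains unproved.
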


We will prove this theorem using the classification of smooth homotopy $\CP^4s$ together with Theorem \ref{cp46thm} and Corollary \ref{linalgforCP}. By Theorem \ref{brumfielclassif}, such a manifold $X$ equals $X_{m,n}$\footnote{This is a slight, but harmless, abuse of notation: by the remarks following Theorem \ref{brumfielclassif}, $X_{m,n}$ should refer to a family of finitely many manifolds homotopy equivalent to $\CP^4$, but all of these have the same class of the stable tangent bundle in real $K$-theory. We will repeat this same abuse of notation in sections \ref{accp6s} and \ref{accp5s}.} where $m,n$ are integers such that $4m^2 - 10m = 28n$. This implies that $m \equiv 0,6 \pmod{14}$ and gives the Pontrjagin classes
\begin{align*}
&p_1(X) = (5 + 24m)u^2,\\
&p_2(X) = (10 + \frac{1}{7}(24^2 m^2 + 10\cdot24m))u^4.
\end{align*}

Condition (a) of Theorem \ref{cp46thm} holds for a bundle $E \in \Vect_4(X)$ if and only if the Chern classes of $E$ satisfy $c_2(E) = (c_1(E)^2 - p_1(X))/2$ and $2c_1(E)c_3(E) = c_2(E)^2 + 10u^4 - p_2(X)$. If we let $a = c_1$ be the coefficient of $c_1(E)$, then these formulas allow us to determine $c_2(E)$ and $c_3(E)$ from $a$ and $m$. Doing so and setting $c_4 = 5$ we have $C = C(a,c_2,c_3,5)$ given by
\[
C = \begin{pmatrix}
a \\
24m + 5 \\
(- 7a^4 + 1008a^2m + 210a^2 + 5184m^2 + 2160m + 525)/56a \\
2880m^2/7 + 1200m/7 + 5
\end{pmatrix}.
\]

Thus Corollary \ref{linalgforCP} implies that almost complex structures on $X$ are in one-to-one correspondence with values of $a$ such that $Q^{-1}C$ consists of integer entries. These values of $a$ are precisely those stated in Theorem \ref{libwoodthm}:

\begin{proposition} \label{cp4polysprop}
$Q^{-1}C$ consists of integers if and only if $a$ divides $25 + \frac{3}{7}(24^2 m^2 + 10\cdot 24m)$.
\end{proposition}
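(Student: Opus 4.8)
The plan is to invert the $4\times4$ matrix $Q$ explicitly and read off the four entries of $Q^{-1}C$ as $\Q$-linear functionals of $C_1,\dots,C_4$, then decide exactly when all four are integers. I would compute $Q^{-1}$ by elementary row reduction (equivalently by the finite-difference procedure used in the proof of Theorem \ref{chernchar}); this expresses the coordinates of $Q^{-1}C$ as $\tfrac16(24C_1-26C_2+9C_3-C_4)$ and three similar combinations, all with denominators only $2$ and $3$. The first simplification is to the entries of $C$ using the constraint $4m^2-10m=28n$: this forces $s:=\tfrac17(24^2m^2+240m)\in\Z$, so $C_4=5(s+1)\in\Z$, and algebraically the numerator $N$ of $C_3$ factors as $N=7(-a^4+6a^2C_2+3D)$, where $C_2=24m+5$ and $D=25+3s$ is precisely the quantity in the statement. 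The factor of $7$ cancels the $7$ in the denominator $56a$, leaving the key identity $C_3=\frac{-a^4+6a^2C_2+3D}{8a}$. This is the crux of the bookkeeping: the only term anywhere with $a$ in a denominator is $\frac{3D}{8a}$, and the only numerical primes surviving in any denominator are $2$ and $3$.

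Substituting the simplified $C_3,C_4$ into the four functionals, I would write the $k$-th coordinate of $Q^{-1}C$ as $\frac{\mathrm{Num}_k}{2^{\alpha_k}3^{\beta_k}a}$ with $\mathrm{Num}_k\in\Z[a,m]$, where $(\alpha_k)=(4,2,4,5)$, $(\beta_k)=(1,0,1,1)$, and the unique term of $\mathrm{Num}_k$ free of $a$ is $c_kD$ with $(c_1,c_2,c_3,c_4)=(27,-3,21,-9)$. Integrality of $Q^{-1}C$ then becomes a prime-by-prime divisibility question. For a prime $p\mid a$ with $p\notin\{2,3\}$ the numerical denominator is prime to $p$ and the constants $c_1,c_2,c_4$ are signed powers of $3$, hence prime to $p$; since every term of $\mathrm{Num}_k$ other than $c_kD$ is divisible by $a$, coordinates $1,2,4$ force exactly $v_p(a)\le v_p(D)$, i.e. the $p$-part of $a\mid D$ (coordinate $3$, with $c_3=21=3\cdot7$, gives nothing stronger, even at $p=7$). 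Thus the real content is concentrated at $p=2$ and $p=3$, where both the numerical denominators and the $c_k$ carry the relevant prime.

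To handle $p=2,3$ I would first extract congruences for $D$ from the constraint: since $24^2$ and $240$ are divisible by $3$ one gets $3\mid s$, and since $m$ is even (because $m\equiv0,6\pmod{14}$) one gets $2^5\mid s$; hence $D=25+3s\equiv25\pmod{96}$, so in particular $D$ is odd, $D\equiv1\pmod3$, and $D\equiv25\pmod{32}$. At $p=3$ a direct check shows coordinate $3$ is the only obstruction: if $3\mid a$ then $v_3(\mathrm{Num}_3)=v_3(21D)=1<1+v_3(a)$ and integrality fails, whereas if $3\nmid a$ all four numerators have the required $3$-adic valuation; since $3\nmid D$ this says exactly $v_3(a)\le v_3(D)$. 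At $p=2$, for even $a$ the term $c_kD$ is odd while all others are even, so $v_2(\mathrm{Num}_k)=0$ and integrality fails; for odd $a$ one uses $2^5\mid s$ and $D\equiv25\pmod{32}$ to verify that in each coordinate the ($a$-divisible) polynomial part is congruent to $-c_kD$ modulo $2^{\alpha_k}$, so that $v_2(\mathrm{Num}_k)\ge\alpha_k$ as required, and since $D$ is odd this is exactly $v_2(a)\le v_2(D)$. Assembling all primes gives $Q^{-1}C\in\Z^4$ if and only if $v_p(a)\le v_p(D)$ for every $p$, that is, $a\mid D$. I expect the $2$-adic step to be the main obstacle: the denominators carry powers of $2$ up to $2^5$, and unlike at odd primes there is no valuation shortcut—the divisibility holds only through an exact cancellation modulo $2^{\alpha_k}$ that relies on the full strength of $2^5\mid s$ (equivalently $m$ even) together with $D\equiv25\pmod{32}$.
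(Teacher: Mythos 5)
Your proposal is correct, and at bottom it is the same computation as the paper's: invert $Q$ explicitly, isolate in each numerator the unique term free of $a$ (a fixed multiple of $D=25+\tfrac37(24^2m^2+240m)$), and decide integrality prime by prime, with Fermat's little theorem at $p=3$ and the constraint on $m$ supplying the needed congruences. The differences are in bookkeeping, and they do buy something. The paper works with the unsimplified entries $C_3, C_4$ and denominators $112a, 28a, 336a, 224a$, so it must separately verify mod-$7$ divisibility of $f$ and of the $a$-divisible remainders $f_1+3f$, $f_2-f$, $f_3+7f$ using $m\equiv 0,6 \pmod{14}$; your preliminary reduction---$s=\tfrac17(24^2m^2+240m)\in\Z$, $C_4=5(s+1)$, and $C_3=(-a^4+6a^2C_2+3D)/(8a)$---cancels every $7$ at the outset, so $p=7$ is handled like any generic prime by coordinates $1,2,4$. (Your data match the paper's exactly: e.g.\ the paper's $a$-free part $63D/112a$ in row $1$ is your $27D/48a$.) Second, the paper rules out even $a$ by the geometric remark that $X$ is non-spin, so strictly it proves the proposition only for odd $a$ (enough for Theorem \ref{libwoodthm}, but an external input to a purely arithmetic statement); your parity argument ($c_kD$ odd, every other term even) makes the stated equivalence self-contained. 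One caveat: like the paper, you assert rather than display the $2$-adic check for odd $a$; it does go through (e.g.\ in coordinate $1$, modulo $16$: $-9a^4+54a^2C_2+27D-40a(s+1)\equiv -9+14+3+8\equiv 0$ using $m$ even and $D\equiv 25 \pmod{32}$), so this is a gap of exposition rather than substance, but a complete write-up should include it.
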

\begin{proof}
$Q^{-1}C$ is given by the column vector
\[
\begin{pmatrix}
\scriptstyle{(- 21a^4 + 3024a^2m + 1078a^2 - 7680am^2 - 14848am - 2520a + 15552m^2 + 6480m + 1575)/112a} \\
        \scriptstyle{(7a^4 - 1008a^2m - 294a^2 + 2880am^2 + 4392am + 700a - 5184m^2 - 2160m - 525)/28a}\\
\scriptstyle{(- 49a^4 + 7056a^2m + 1918a^2 - 23040am^2 - 28416am - 4200a + 36288m^2 + 15120m + 3675)/336a} \\
       \scriptstyle{(7a^4 - 1008a^2m - 266a^2 + 3840am^2 + 4064am + 560a - 5184m^2 - 2160m - 525)/224a}
\end{pmatrix}.
\]

Let $f_i$ be the numerator in row $i$. Note that the last three terms in each numerator are the only three terms not containing $a$, and that in each row these three terms are all multiples of the three terms in row 2, given by $f = -5184m^2 - 2160m - 525 = -3(1728m^2 + 720m + 175)$ (specifically, in row 1 these terms are $-3f$ and in row 3 they are $-7f$). Thus we can write
\[
Q^{-1}C = \begin{pmatrix}
    -3f/112a + (f_1 - (-3f))/112a \\
    f/28a + (f_2 - f)/28a \\
    -7f/336a + (f_3 - (-7f))/336a \\
    f/224a + (f_4 - f)/224a
\end{pmatrix}.
\]

Now, since $a$ must be odd ($X \simeq \CP^4$ is non-spin), each $f_i$ is always divisible by the relevant power of 2 that shows up in the corresponding denominator. In other words, $f_i/2^{d_i}$ is an integer for all $i$ where $d_i$ is the number of $2$s in the prime factorization of the denominator in row $i$. Hence $Q^{-1}C$ consists of integers if and only if
\[
\begin{pmatrix}
-3f/7a + (f_1 - (-3f))/7a \\
    f/7a + (f_2 - f)/7a \\
    -7f/21a + (f_3 - (-7f))/21a \\
    f/7a + (f_4 - f)/7a
\end{pmatrix}
\]
consists of integers. Now, note that $f \equiv 0$ mod $3$ and $f \equiv -4m^2 - 4m \pmod{7}$. Since $m \equiv 0,6 \pmod{14}$, we have $m^2 \equiv 0,1 \pmod{7}$, so that $f \equiv 0 \pmod{7}$. A similar computation with the other terms implies that $f_1 + 3f, f_2 - f,$ and $f_3 + 7f$ are all zero mod $7$. Further, $f_3 + 7f \equiv a^2 - a^4 \equiv 0$ mod $3$ for all $a$ by Fermat's little theorem.

Therefore, $Q^{-1}C$ consists of integers if and only if $a$ divides $f/21 = -3(1728m^2 + 720m + 175)/21 = -\frac{1}{7}(1728m^2 + 720m + 175) = -25 -\frac{3}{7}(24^2 m^2 + 10\cdot 24m).$
\end{proof}

\section{Almost Complex Structures on Homotopy $\CP^6$s} \label{accp6s}

In this section we apply the same technique to homotopy $\CP^6$s as in the previous section. By Theorem \ref{brumfielclassif}, homotopy $\CP^6$s are classified (up to torsion in the homotopy structure set) by integers $m,n,q$ satisfying $0 = 32m^3 - 252m^2 + 301m - 672mn + 1152n + 1488q$. We will prove

\begin{theorem} \label{accp6thm}
Let $X = X_{m,n,q}$ be a homotopy $\CP^6$ with $m,n,q$ as above. Almost complex structures on $X$ with $c_1(X) = au$ and $c_3(X) = cu^3$ are in one-to-one correspondence with integers $a$ and $c$ satisfying the requirements
\[
    (a,c) \equiv \begin{cases} (1 \pmod{16}, \,1 \pmod 8) \\ (7 \pmod{16},\, 3 \pmod 8) \\ (9 \pmod{16},\, 5 \pmod 8) \\ (15 \pmod{16},\, 7 \pmod 8),
    \end{cases}
\]
$(a,c) \equiv (\pm1, \pm 1) \pmod{3}$, and $a$ divides $147 - 8c^2 + 4608m^3 - 53568m^2 + 106344m - 138240nm + 362880n + 483840q$.

In particular, $(a,c) = (1,1)$ always satisfies this list of requirements, so $X$ admits a nonzero number of almost complex structures.
\end{theorem}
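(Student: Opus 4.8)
The plan is to run the argument of section \ref{accp4s} with $n=6$. First I would record the Pontrjagin classes $p_1(X), p_2(X), p_3(X)$ of $X = X_{m,n,q}$, computed from Brumfiel's description $TX = T\CP^6 + \xi$ exactly as in the $\CP^4$ case (the total Pontrjagin class is multiplicative since $H^*(X)$ is torsion-free, so one only needs $p(T\CP^6) = (1+u^2)^7$ together with the Pontrjagin classes of the generators $\xi_1, \xi_2, \xi_3$, which follow from the $\omega$-powers via $\omega \otimes \C = L + \bar L$). Since $\xi_3 = 504\omega^3$ feeds $p_3(X)$, the parameter $q$ enters there; I would use the constraint $32m^3 - 252m^2 + 301m - 672mn + 1152n + 1488q = 0$ to solve $q = -(32m^3 - 252m^2 + 301m - 672mn + 1152n)/1488$ and eliminate it, and since $1488 = 2^4\cdot 3\cdot 31$ this is the source of the factor $\frac{1}{31}$ in the final divisibility target.

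Next I would verify the hypotheses of Theorem \ref{cp46thm}: condition (1) is immediate because $X$ has torsion-free cohomology, and condition (2) holds because $KO(\CP^6) = \Z[\omega]/(\omega^4)$ is free. Hence almost complex structures on $X$ biject with rank-$6$ complex bundles $E$ satisfying (a) and (b). Writing $c_i = c_i(E)$ and expanding condition (a) through the standard identities $p_1 = c_1^2 - 2c_2$, $p_2 = c_2^2 - 2c_1c_3 + 2c_4$, $p_3 = c_3^2 - 2c_2c_4 + 2c_1c_5 - 2c_6$, together with condition (b), which reads $c_6 = 7$ since $e(X) = 7u^6$, lets me solve successively for $c_2$, $c_4$, and $c_5$ in terms of the two free parameters $a := c_1$ and $c := c_3$ (and $m, n$); note that $c_5$ acquires $a$ in its denominator, and that the instances of (a) for $i = 4,5,6$ are vacuous as they lie in $H^{>12}(X) = 0$.

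With $c_2, c_4, c_5$ and $c_6 = 7$ determined, I would assemble the column vector $C = C(a, c_2, c, c_4, c_5, 7)$ from the degree-$6$ formula in section \ref{chernvects} and apply Corollary \ref{linalgforCP}: almost complex structures correspond to integer pairs $(a,c)$ with $Q^{-1}C \in \Z^6$. Computing $Q^{-1}C$ and splitting each denominator into its $2$-, $3$-, $5$-, $31$-, and $a$-parts (here $\det Q = 2^{12}3^5 5^2$) is the bulk of the work. I expect the $2$-part, complicated by the $a$ in the denominator of $c_5$, to force the coupled congruences on $a \pmod{16}$ and $c \pmod 8$ listed as the four cases; the $3$-part to yield both the necessity of $m \equiv 0 \pmod 3$ (by showing that when $3 \nmid m$ some entry is non-integral for every $(a,c)$) and the condition $(a,c) \equiv (\pm 1, \pm 1) \pmod 3$; and the $5$- and $31$-parts to be automatically integral (the former by a Fermat's-little-theorem argument as in Proposition \ref{cp4polysprop}, the latter by reducing the constraint mod $31$). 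Clearing all of these should leave exactly the single condition that $a$ divide $147 - 8c^2 + \frac{1}{31}(\cdots)$.

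Finally, for the concluding assertion I would substitute $(a,c) = (1,1)$: it lies in the first congruence class $(1 \bmod 16,\, 1 \bmod 8)$, satisfies $(1,1) \equiv (+1,+1) \pmod 3$, and has $a = 1$ dividing every integer, so it meets all requirements whenever $m \equiv 0 \pmod 3$; thus every homotopy $\CP^6$ with $3 \mid m$ is almost complex. The main obstacle I anticipate is the $2$-adic bookkeeping in the previous step: determining precisely which powers of $2$ survive in each of the six entries of $Q^{-1}C$ and proving that integrality forces exactly the stated coupled system of congruences modulo $16$ and $8$, rather than a coarser or finer one.
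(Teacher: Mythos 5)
Your proposal is correct and follows essentially the same route as the paper: compute the Pontrjagin classes from Brumfiel's classification, eliminate $q$ via the constraint (the source of the $\frac{1}{31}$), solve conditions (a), (b) of Theorem \ref{cp46thm} for $c_2, c_4, c_5$ in terms of $a = c_1$ and $c = c_3$ with $c_6 = 7$, and then analyze integrality of $Q^{-1}C$ prime by prime, which is exactly how the paper extracts the mod-$16$/mod-$8$ congruences, the mod-$3$ obstruction $m \equiv 0$, the automatic $5$- and $31$-divisibility, and the final divisibility condition on $a$. The only content you defer --- the explicit $2$-adic and $3$-adic bookkeeping on the numerators $f_i$, including ruling out $(a,c) \equiv (0,0) \pmod 3$ --- is precisely the computation carried out in the paper's Proposition \ref{cp6polysprop}.
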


We follow the same steps as the proof for Theorem \ref{libwoodthm}. Using the notation of Theorem \ref{brumfielclassif}, let us first obtain formulas for the Pontrjagin classes of $X = X_{m,n,q}$. Since $\omega = r(H-1)$, the complexification of $\omega$ equals $H + t(H) - 2$ in $\K(X)$. Since $Ht(H) = 1$ and complexification is multiplicative on $\KO(X)$, we obtain
\begin{align*}
    &p_1(\omega) = u^2, &&p_2(\omega) = 0, &&p_3(\omega) = 0, \\
    &p_1(\omega^2) = 0, &&p_2(\omega^2) = -6u^4, &&p_3(\omega^2) = 20u^6, \\
    &p_1(\omega^3) = 0, &&p_2(\omega^3) = 0, &&p_3(\omega^3) = 120u^6.
\end{align*}

Since $H^*(X)$ has no $2$-torsion, the formula $p(E+F) = p(E)p(F)$ implies
\begin{align*}
p(\xi) &= p((24m)\omega + (98m + 240n)\omega^2 + (111m + 380n + 504q)\omega^3) \\
       &= p(\omega)^{24m}p(\omega^2)^{98m+240n}p(\omega^3)^{111m + 380n + 504q} \\
       &= (1+u^2)^{24m}(1 - 6u^4 + 20u^6)^{98m+240n}(1 + 120u^6)^{111m + 380n + 504q} \\
       &= 1 + (24m)u^2 + (12m(24m-1) -6(98m+240n))u^4 + (4m(24m-1)(24m-2) \\
       &- 6(24m)(98m + 240n) + 20(98m + 240n) + 120(111m + 380n + 504q))u^6.
\end{align*}

This gives
\begin{align*}
    &p_1(\xi) = (24m)u^2, \\
    &p_2(\xi) = (288m^2 - 600m - 1440n)u^4, \\
    &p_3(\xi) = (2304m^3 - 14400m^2 + 15288m + 50400n - 34560mn + 60480q)u^6,
\end{align*}

and since $p(X) = p(\xi + T{\CP^6}) = p(\xi)(1 + 7u^2 + 21u^4 + 35u^6)$, formulas for the Pontrjagin classes of $X_{m,n,q}$ are
\begin{align*}
    &p_1(X) = (7 + 24m)u^2 \\
    &p_2(X) = (21 + 288m^2 - 432m - 1440n)u^4 \\
    &p_3(X) = (35 + 2304m^3 - 12384m^2 + 11592m - 34560mn + 40320n + 60480q)u^6.
\end{align*}

Let $E \in \Vect_6(X)$ be an arbitrary bundle and write $c_i(E) = a_iu^i$ and $p_i(X) = p_iu^{2i}$. Then the conditions of Theorem \ref{cp46thm} holds for $E$ if and only if the coefficients of the Chern classes of $E$ satisfy $a_6=7$ and
\begin{align*}
    &a_2 = \frac{a_1^2 - p_1}{2} \\
    &a_4 = a_1a_3 + \frac{p_2 - a_2^2}{2} \\
    &a_5 = \frac{7 + a_2a_4}{a_1} + \frac{p_3 - a_3^2}{2a_1}.
\end{align*}

Thus the values of $a_2, a_4,$ and $a_6$ are determined from $p_1, p_2, p_3, a = a_1$, and $c = a_3$. Using these formulas together with the expression for $C$ found at the end of section \ref{chernvects} and the Pontrjagin classes above, one obtains that $C = C(a_1,\ldots,a_5,7)$ is given by
\[
C = \begin{pmatrix} a \\
                    7 + 24m \\
                    21a/2 + 3c + 36am - a^3/2 \\
                    7 + 1200m + 2800n \\
                    \scriptstyle{a^5/16 - 15ma^3/2 - 35a^3/16 + 315a/16 + 180m^2a + 1800na + 855ma + 5\varphi/(16a)} \\
                    7 + 45864m + 151200n + 181440q
\end{pmatrix},
\]
where $\varphi = 147 + 4608m^3 - 53568m^2 + 106344m - 138240nm + 362880n + 483840q - 8c^2$.

By Corollary \ref{linalgforCP}, almost complex structures on $X$ are in one-to-one correspondence with values of $a=a_1$ and $c=a_3$ such that $Q^{-1}C$ consists of integer entries. Using $Q$ given at the end of section \ref{chernvects}, one obtains that $v$ is a $6 \times 1$ column vector where the entry in row $i$ is a rational expression whose numerator is a polynomial $f_i = f_i(a,c,m,n,q)$ and whose denominator is either $96a, 768a,$ or $120a$. Specifically, $v$ is\footnote{This is possible, but tedious, to obtain by hand. In the words that follow this and in the proof of Proposition \ref{cp6polysprop} we make a number of claims that are also impractical to verify by hand. We used MATLAB to obtain and verify these claims; the code used that justifies our assertions is available upon request.}
\[ v = 
\begin{pmatrix}
f_1/96a \\
f_2/768a \\
f_3/96a \\
f_4/768a \\
f_5/120a \\
f_6/768a
\end{pmatrix}
\]
with
\begin{align*}
    f_1 &= 5\varphi - 6720a + 1392ac - 205536am - 478080an - 145152aq + 30384a^2m \\ &- 120a^4m + 28800a^2n + 5763a^2 - 267a^4 + a^6 + 2880a^2m^2 \\
    f_2 &= -95\varphi + 94080a - 22128ac + 3633792am + 8732160an + 2903040aq \\ &- 525456a^2m + 2280a^4m - 547200a^2n - 89193a^2 + 4353a^4 - 19a^6 - 54720a^2m^2
    \\
    f_3 &= 15\varphi - 11760a + 2976ac - 542016am - 1332480an - 483840aq \\ &+ 76752a^2m - 360a^4m + 86400a^2n + 12001a^2 - 601a^4 + 3a^6 + 8640a^2m^2
    \\
    f_4 &= -85\varphi + 56448a - 14736ac + 2940288am + 7349760an + 2903040aq \\ &- 409392a^2m + 2040a^4m - 489600a^2n - 59811a^2 + 3051a^4 - 17a^6 - 48960a^2m^2
    \\
    f_5 &= 5\varphi - 2940a + 780ac - 167640am - 424800an - 181440aq  \\ &+ 23040a^2m - 120a^4m + 28800a^2n + 3189a^2 - 165a^4 + a^6 + 2880a^2m^2
    \\
    f_6 &= - 5\varphi + 2688a - 720ac + 164736am + 422400an + 193536aq \\ &- 22320a^2m + 120a^4m - 28800a^2n - 2963a^2 + 155a^4 - a^6 - 2880a^2m^2
\end{align*}
and note that $96 = 2^5\cdot 3, 768 = 2^8 \cdot 3,$ and $120 = 2^3 \cdot 3 \cdot 5$. Thus $v$ consists of integer entries if and only if all $f_i$ are zero modulo the appropriate prime powers and divisible by $a$.

The following proposition completes the proof of Theorem \ref{accp6thm}.

\begin{proposition} \label{cp6polysprop}
$v = Q^{-1}C$ consists of integers if and only if $a$ and $c$ satisfy the requirements stated in Theorem \ref{accp6thm}.
\end{proposition}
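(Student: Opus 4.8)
The plan is to reduce the integrality of each entry of $v = Q^{-1}C$ to a manageable set of congruence conditions by exploiting the prime factorization of the three denominators $2976 = 2^5\cdot 3\cdot 31$, $23808 = 2^8\cdot 3\cdot 31$, and $3720 = 2^3\cdot 3\cdot 5\cdot 31$. Since the common factor $a$ sits in every denominator, I would first separate the divisibility-by-$a$ requirement from the requirement that each numerator $f_i$ vanish modulo the relevant prime power $2^{d_i}\cdot 3\cdot (5)\cdot 31$. The key structural observation, already flagged in the text, is that each $f_i$ shares with $f_1$ a common block of ``constant'' terms (those free of $a$) up to an integer multiple, exactly as in the proof of Proposition \ref{cp4polysprop}; this lets me treat the $a$-free part once and then handle the $a$-dependent corrections prime by prime.

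First I would dispense with the prime $31$. Because $X$ is a homotopy $\CP^6$, the integers $m,n$ are constrained by $32m^3 - 252m^2 + 301m - 672mn + 1152n \equiv 0 \pmod{1488}$, which forces $m\equiv 0 \pmod{16}$ and restricts $(m,n)\bmod 31$ to the listed $30$ pairs. I would substitute these residues into each $f_i$ and verify that the $31$-part of every denominator is always absorbed, so that $31$ imposes no further condition on $(a,c)$. Next I would extract the prime $3$: reducing $f_i \bmod 3$ and using Fermat's little theorem (as in the $n=4$ case, where $a^4\equiv a^2$) I expect the $a$-dependent terms to collapse, leaving a condition purely on the $a$-free block. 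This is precisely where the dichotomy $m\equiv 0$ versus $m\not\equiv 0 \pmod 3$ should emerge: when $m\not\equiv 0 \pmod 3$ the shared constant block fails to vanish mod $3$ and \emph{no} choice of $(a,c)$ can make $v$ integral, giving the nonexistence half of Theorem \ref{accp6thm}; when $m\equiv 0 \pmod 3$ the mod-$3$ obstruction reduces to the stated congruence $(a,c)\equiv(\pm1,\pm1)\pmod 3$.

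The prime $2$ is the delicate case and I expect it to be the main obstacle. The powers $2^5, 2^8, 2^3$ are large, so I must show that each $f_i$ is divisible by the full power of $2$ in its denominator. Since $X\simeq\CP^6$ is non-spin, $a=c_1$ is odd, which kills the factor of $a$ in the denominators but leaves the genuine $2$-power to be checked against the numerators. I would reduce $f_i$ modulo $256$ (it suffices to work mod the largest relevant power) and, using $m\equiv 0\pmod{16}$ together with the oddness of $a$ and the parity of $c$, solve the resulting congruences in $a$ and $c$. The four branching cases $(a,c)\equiv(1,1),(7,3),(9,5),(15,7)$ modulo $(16,8)$ should appear exactly as the solution set of these $2$-adic congruences; verifying that these four pairs are necessary and sufficient, and that they are independent of the admissible residues of $(m,n)$, is the computational heart of the argument.

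Finally, with the congruence conditions at the primes $2$ and $3$ established and the prime $31$ shown to be automatic, the only remaining requirement is divisibility by $a$. Factoring out the shared $a$-free block exactly as in Proposition \ref{cp4polysprop}, the divisibility-by-$a$ condition should reduce to ``$a$ divides the constant term of $f_2$ (or a fixed rational multiple thereof),'' which I would identify with the stated divisor $147 - 8c^2 + \tfrac{1}{31}(-1152m^3 + 931632m^2 + 2488320mn + 262584m - 362880n)$. Combining the parity conditions, the mod-$3$ condition, and this divisibility yields precisely the list in Theorem \ref{accp6thm}; a direct check that $(a,c)=(1,1)$ satisfies all of them establishes the existence of at least one almost complex structure whenever $m\equiv 0\pmod 3$.
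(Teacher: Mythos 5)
Your overall architecture matches the paper's proof: isolate the $a$-free block $f$ of $f_1$, use the fact that each $f_i$ contains an integer multiple of that block, handle the primes $2, 3, 5, 31$ appearing in the denominators separately, and reduce what remains to ``$a$ divides a fixed rational multiple of $f$,'' which is indeed the stated divisor $f/(5\cdot 31)$. Your treatments of the prime $31$ (automatic from the listed residues of $(m,n)$) and of the prime $2$ (oddness of $a,c$ suffices for $2^3$ and $2^5$; the four cases mod $(16,8)$ come from $2^8$) are exactly the paper's. However, your mod-$3$ step---the crux of the whole theorem, since it is where the obstruction $m \equiv 0 \pmod 3$ arises---is wrong as described. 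You claim that under reduction mod $3$ the $a$-dependent terms collapse by Fermat's little theorem, leaving a condition purely on the $a$-free block, and that for $m \not\equiv 0 \pmod 3$ this block fails to vanish. Neither is true: the block satisfies $f \equiv -c^2 \pmod 3$, with no dependence on $m$ whatsoever, while the numerators reduce to $f_1 \equiv a^2 - c^2$, $f_2 \equiv c^2 - a^2$, $f_3 \equiv -2am^2 + a^2 - a^4$, $f_4 \equiv -2a^6 + 2c^2$, $f_5 \equiv a^6 - c^2$, $f_6 \equiv 2am^2 - 2a^2 + 2a^4 - a^6 + c^2$, all of which are $a$-dependent, with the $m$-dependence sitting precisely in the $a$-dependent terms of rows $3$ and $6$. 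The actual obstruction is that $(f_3 - 3f)/a \equiv -2m^2 + a - a^3 \equiv m^2 \pmod 3$, so row $3$ cannot be made integral when $m \not\equiv 0 \pmod 3$. Note that your plan is internally inconsistent with the result you are proving: if the mod-$3$ condition depended only on the $a$-free block, it could never produce the constraint $(a,c) \equiv (\pm 1, \pm 1) \pmod 3$, which involves $a$.

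There is a second gap even in the case $m \equiv 0 \pmod 3$: requiring $3 \mid f_i$ for all $i$ yields $a^2 \equiv c^2 \pmod 3$, whose solution set is $(a,c) \equiv (\pm 1, \pm 1)$ \emph{and} $(0,0) \pmod 3$, and your proposal never rules out $(0,0)$. This exclusion is not a mod-$3$ computation at all; in the paper it comes from the interaction with the divisibility-by-$a$ condition: if $3 \mid a$ and $3 \mid c$, then $c^2 \equiv 0 \pmod 9$ together with $m \equiv 0 \pmod 3$ forces $\tilde f = f/(3 \cdot 5\cdot 31) \equiv 49 \equiv 1 \pmod 3$, contradicting $a \mid \tilde f$. (You also include the prime $5$ in the factorization of $3720$ but never check it; one needs $f \equiv 0$ and $f_5 - f \equiv 4a^2 + a^6 \equiv 0 \pmod 5$, the latter again by Fermat.) Without these repairs---relocating the mod-$3$ obstruction to the $a$-dependent terms of rows $3$ and $6$, and eliminating the $(0,0)$ branch via the divisibility condition---your plan does not deliver the list of conditions in Theorem \ref{accp6thm}.
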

\begin{proof}
Divisibility of the appropriate $f_i$ by $2^3$ and $2^5$ is independent of $m,n,q$ and only requires $a,c$ odd. Divisibility by $2^8$ is still independent of $m,n,q$ and requires

\[
    (a,c) \equiv \begin{cases} (1 \pmod{16}, \,1 \pmod 8) \\ (7 \pmod{16},\, 3 \pmod 8) \\ (9 \pmod{16},\, 5 \pmod 8) \\ (15 \pmod{16},\, 7 \pmod 8).
    \end{cases}
\]

Let $f$ be the collection of all terms not containing $a$ in $f_1$; note that $f = 5\varphi$.

Grouping the terms in each $f_i$ into terms that do not contain $a$ and terms that do contain $a$ gives
\[v = \begin{pmatrix}
f/96a + (f_1 - f)/96a \\
-19f/768a + (f_2 - (-19f))/768a \\
3f/96a + (f_3 - 3f)/96a \\
-17f/768a + (f_4 - (-17f))/768a \\
f/120a + (f_5 - f)/120a \\
-f/768a + (f_6 - (-f))/768a
\end{pmatrix}
\]
and under the restrictions on $a$ and $c$ mod $16$ and $8$ (respectively), $v$ consists of integers if and only if
\[
\begin{pmatrix}
f_1/3a \\ f_2/3a \\ f_3/3a \\ f_4/3a \\ f_5/15a \\ f_6/3a
\end{pmatrix}
=
\begin{pmatrix}
f/3a + (f_1 - f)/3a \\
-19f/3a + (f_2 - (-19f))/3a \\
3f/3a + (f_3 - 3f)/3a \\
-17f/3a + (f_4 - (-17f))/3a \\
f/15a + (f_5 - f)/15a \\
-f/3a + (f_6 - (-f))/3a
\end{pmatrix}
\]
consists of integers.

Concerning divisibility by $5$, we see that $f \equiv 0$ mod 5 and reducing the coefficients of $f_5$ mod 5 we get $4a^2 + a^6$, which is congruent to $0$ mod $5$ by Fermat's little theorem.

Now we claim that the above vector consists of integers if and only if $(a,c) \equiv (\pm1, \pm 1)$ mod 3. To prove this, note that $f \equiv 2c^2$ mod $3$, and reducing the coefficients of each $f_i$ modulo $3$ (and using Fermat's little theorem) yields $\pm(a^2 - c^2)$, except for $f_3$ where we obtain 0 mod 3. Thus each $f_i$ is divisible by 3 if and only if $a^2 - c^2 \equiv 0$ mod 3. Solutions to this equation are $(a,c) \equiv (0,0), (\pm1, \pm 1)$ mod 3.

In the case $(a,c) \equiv (0,0)$ mod $3$, we have $f \equiv 0$ mod $3$. To obtain integers in $v$, then, we must have a value of $a$ that divides $f/(3\cdot5) = \varphi/3 = 49 - \frac{8}{3}c^2 + 35448m + 120960n + 161280q - 46080mn - 17856m^2 + 1536m^3$. Call this $\tilde{f},$ so there is some integer $k$ such that $ka = \tilde{f}$. But if $c$ is zero mod 3, then all terms of $\tilde{f}$ reduce to zero mod 3 except for the constant term $49$, so we get $\tilde{f} \equiv 1$ mod $3$, which gives a contradiction since $a \equiv 0$ mod $3$.

This shows that $(a,c) \equiv (\pm1, \pm1)$ mod $3$ are the only possibilities; in particular, $a$ and $3$ are relatively prime and $f$ is not divisible by $3$. Therefore, under these conditions, $v$ consists of integers if and only if $a$ divides $f/5 = \varphi$.
\end{proof}

\section{Almost Complex Structures on Homotopy $\CP^5$s} \label{accp5s}

Since $\KO(\CP^5) \cong \Z[\omega]/(2\omega^3, \omega^4)$ has $2$-torsion, we cannot use the previous method to determine whether homotopy $\CP^5$s admit almost complex structures. Instead, we will find for $X \simeq \CP^5$ an element $E \in \K(X)$ whose real reduction is the stable tangent bundle of $X$ and whose top-dimensional Chern class is the Euler class of $X$, so that $X$ admits an almost complex structure by Theorem \ref{stabunstab}.

First we determine a formula for the real reduction map $r: K(X) \to KO(X)$. Consider first $r$ with $2$-torsion ignored; that is, the composite $\bar{r}: K(X) \xrightarrow{r} KO(X) \twoheadrightarrow KO(X)/(\omega^3)$.

\begin{proposition} \label{rbar}
$\bar{r}$ on the additive generators of $K(X)$ is given as follows.
\begin{align*}
    &1 \mapsto 2 && L^3 \mapsto 3\omega^2 \\
    &L \mapsto \omega && L^4 \mapsto 2\omega^2 \\
    &L^2 \mapsto 2\omega + \omega^2 && L^5 \mapsto 0
\end{align*}
\end{proposition}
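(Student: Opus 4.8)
The plan is to compute $\bar r$ directly from the two fundamental identities $c\circ r = 1+t$ and $r\circ c = 2$, using the fact that everything can be expressed in terms of the single generator $L = H-1$ and its conjugate. First I would record the action of complex conjugation $t$ on $K(\CP^5)$. Since $t(H) = H^{-1} = \bar H$ and $Ht(H) = 1$, writing $t(L) = t(H) - 1 = H^{-1} - 1$ and expanding $H^{-1} = (1+L)^{-1} = \sum_{k\geq 0}(-1)^k L^k$ (a finite sum because $L^6 = 0$) gives $t(L)$, and hence $t(L^j)$, as explicit polynomials in $L$. This is the one genuinely mechanical input, and it is elementary.

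Next I would use $c\circ r = 1+t$ together with the injectivity of $c$ modulo the relevant torsion. The key structural point is that $c\colon KO(X) \to K(X)$ sends $\omega = r(L)$ to $c(r(L)) = (1+t)(L) = L + t(L)$, so $c(\omega)$ is a known polynomial in $L$, and similarly $c(\omega^2), c(\omega^3)$ are computed by multiplicativity of $c$. Working modulo $\omega^3$ on the $KO$ side (i.e. in $KO(X)/(\omega^3)$, which is torsion-free by Fujii's computation restricted to the degrees that survive), the map $c$ is injective onto its image, so each value $\bar r(L^j)$ is pinned down by the equation $c(\bar r(L^j)) \equiv (1+t)(L^j) \pmod{\,c(\omega^3)\,}$. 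Concretely, I would set $\bar r(L^j) = \alpha_j\omega + \beta_j\omega^2$ (the constant term being forced by the augmentation, since $\bar r$ of a rank-zero element lies in $\widetilde{KO}$, while $\bar r(1)=2$ comes from $r\circ c(1) = 2$), apply $c$, and match coefficients of $u, u^2$ after passing to rational cohomology via the Chern character, which is injective on $K(\CP^5)\otimes\Q$.

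The main obstacle, and the step deserving the most care, is the bookkeeping that shows the proposed formulas are \emph{consistent} and not merely forced in the top cohomological degree: one must check that matching $c(\bar r(L^j)) = (1+t)(L^j)$ holds simultaneously in all degrees up to $u^2$ (degree $u^3$ being killed in $KO(X)/(\omega^3)$), and that the integrality of the coefficients $\alpha_j,\beta_j$ comes out correctly. The cleanest route is to verify the six stated values satisfy $c\bar r = 1+t$ by a direct comparison of Chern characters, using $\ch(L^j) = e^{ju}$ and the known images $\ch(c(\omega)) = \ch(L+t(L))$, rather than inverting $c$ abstractly; this turns the whole proposition into a finite check that $\ch\bigl(c(\alpha_j\omega+\beta_j\omega^2)\bigr)$ agrees with $\ch\bigl((1+t)(L^j)\bigr)$ through order $u^2$. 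I would present $\bar r(L)=\omega$ as the definitional base case, derive $\bar r(L^2)$ from $L^2 = (H-1)^2$ and the relation $c(\omega^2) = (L+t(L))^2$, and then obtain $\bar r(L^3), \bar r(L^4), \bar r(L^5)$ inductively, noting that the vanishing $\bar r(L^5)=0$ reflects that $L^5$ maps into the ideal $(\omega^3)$ that we have quotiented out.
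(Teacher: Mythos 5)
Your strategy coincides with the paper's: compute $t(L)$ from $H^{-1}=(1+L)^{-1}$, apply $c\circ r = 1+t$, and use injectivity of $c$ modulo the $2$-torsion ideal $(\omega^3)$ to pin down $\bar r(L^j) = \alpha_j\omega+\beta_j\omega^2$. The paper matches coefficients of powers of $L$ in $K(X)\cong\Z[L]/(L^6)$ where you propose to match Chern characters in $H^*(\CP^5;\Q)$ — an equivalent move in principle, since $K(\CP^5)$ is torsion-free. However, two of your concrete steps fail as written. First, $\ch(L^j)\neq e^{ju}$: since $L^j=(H-1)^j$, one has $\ch(L^j)=(e^u-1)^j$, whereas $e^{ju}=\ch(H^j)$. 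With your formula the $j=2$ computation would give $\ch\bigl((1+t)(L^2)\bigr) = 2\cosh 2u = 2+4u^2+\tfrac{4}{3}u^4$ instead of the correct $(e^u-1)^2+(e^{-u}-1)^2 = 2u^2+\tfrac{7}{6}u^4$, so you would read off $\alpha_2=4$ rather than $2$ (and get an inconsistent constant term).

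Second, and more seriously, the degrees you match are wrong. Both sides of $c(\bar r(L^j)) = (1+t)(L^j)$ are \emph{even} power series in $u$, with $\ch(c(\omega)) = 2\cosh u - 2 = u^2+\tfrac{1}{12}u^4$ and $\ch(c(\omega^2)) = u^4$ in $\Q[u]/(u^6)$. Hence all information about $\beta_j$ sits in the coefficient of $u^4$: checking agreement ``through order $u^2$'' determines $\alpha_j$ only and would accept any value of $\beta_j$ whatsoever — in particular it cannot distinguish $\bar r(L^3)=3\omega^2$, $\bar r(L^4)=2\omega^2$, $\bar r(L^5)=0$ from the zero map, which is most of the content of the proposition. (Relatedly, there is no ``degree $u^3$ killed in $KO(X)/(\omega^3)$''; odd-degree terms vanish identically on both sides, and $c(\omega^3)=0$ outright because $\omega^3$ is $2$-torsion while $K(X)$ is torsion-free.) The fix is immediate: match coefficients of $u^2$ \emph{and} $u^4$, using the correct $\ch(L^j)=(e^u-1)^j$; with those corrections your argument goes through and reproduces the paper's proof.
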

\begin{proof}
$r(1) = 2$ is trivial and $r(L) = \omega$ by definition, so the same is true for $\bar{r}$. For the remaining powers, first note that since $1 = H^{-1}H = H^{-1}(1 + L)$, one has $H^{-1} = (1+L)^{-1}$ so that $t(L) = t(H-1) = H^{-1} - 1 = -L + L^2 - L^3 + L^4 - L^5$.

Since $\KO(X)$ is spanned by $\omega, \omega^2$, and $\omega^3$, we have $r(L^2) = m\omega + n\omega^2 + q\omega^3$. Applying $c$ we obtain $c \circ r (L^2) = mc(\omega) + nc(\omega^2)$. Since $c \circ r = 1 + t$ and $c(\omega) = L^2 - L^3 + L^4 - L^5$ (Lemma 3.13 of \cite{sanderson1964immersions}), this equation implies $L^2 + t(L)^2 = m(L^2 - L^3 + L^4 - L^5) + n(L^4 - 2L^5)$. The left-hand side of this equation is $2L^2 - 2L^3 + 3L^4 - 4L^5$, so $m=2$ and $n=1$.

Since $c \circ r(L^2) = 2c(\omega) + c(\omega^2) = c(2\omega + \omega^2)$ and $c$ is injective modulo $2$-torsion, we have $r(L^2) = 2\omega + \omega^2$ modulo $2$-torsion.

Similarly, one obtains $c\circ r (L^3) = c(3\omega^2), c \circ r(L^4) = c(2\omega^2), $ and $c \circ r(L^5) = 0$. Injectivity of $c$ modulo $2$-torsion gives $\bar{r}$ on these powers of $L$.
\end{proof}

To resolve the ambiguity in $2$-torsion we will make use of the Adams operations on real $K$-theory. Let $\psi_{\C}^k$ and $\psi_{\R}^k$ denote the Adams operations $\psi^k$ on complex and real $K$-theory, respectively, and omit subscripts when it is clear from context. Recall that $\psi_{\R}^k \circ r = r \circ \psi_{\C}^k$ (Proposition 7.40 of \cite{karoubi2008}) and on complex projective space, $\psi_{\C}^k(L) = (L+1)^k - 1$. Thus we have
\[
\psi^k(\omega) = \psi_{\R}^k \circ r (L) = r \circ \psi_{\C}^k(L) = r((L+1)^k - 1).
\]
\begin{proposition} \label{rL2L4}
$r(L^2) = \omega^2 + 2\omega$ and $\psi^2(\omega) = \omega^2 + 4\omega$. Together these imply $\psi^4(\omega) = 20\omega^2 + 16\omega$ and $r(L^4) = 2\omega^2$.
\end{proposition}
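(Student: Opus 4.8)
The proposition packages four computations that are not independent: two are proved by direct calculation and two follow by naturality of the Adams operations. I would organize the proof around the relation $\psi^k_{\R}\circ r = r\circ\psi^k_{\C}$ together with the formula $\psi^k_{\C}(L)=(L+1)^k-1$, which lets me transfer known values of $r$ on powers of $L$ into values of $\psi^k$ on $\omega$, and conversely. The statement $r(L^2)=\omega^2+2\omega$ is already available modulo $2$-torsion from Proposition~\ref{rbar}; the new content at the level of genuine $KO(\CP^5)$ is that there is no extra $2\omega^3$ term. Since the ambient group is $\KO(\CP^5)=\Z[\omega]/(2\omega^3,\omega^4)$, the only $2$-torsion lives in the $\omega^3$ coefficient, so the ambiguity to be resolved is precisely whether $r(L^2)$ acquires an $\omega^3$.

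First I would compute $\psi^2(\omega)$ directly from the displayed naturality formula: $\psi^2(\omega)=r\bigl((L+1)^2-1\bigr)=r(L^2+2L)=r(L^2)+2r(L)=r(L^2)+2\omega$. Using the mod-$2$-torsion value $r(L^2)\equiv \omega^2+2\omega$ from Proposition~\ref{rbar}, this gives $\psi^2(\omega)\equiv \omega^2+4\omega$ modulo the potential $\omega^3$ term. To pin down the $\omega^3$ coefficient I would exploit the fact that $\psi^2$ is a ring homomorphism and that $\omega$ satisfies known multiplicative relations in $KO(\CP^5)$; alternatively, I would compute $\psi^2(\omega)$ a second way, e.g.\ by iterating or by comparing $c\circ\psi^2_{\R}=\psi^2_{\C}\circ c$ against the known complexification $c(\omega)=L^2-L^3+L^4-L^5$, and check that both routes force the $\omega^3$ coefficient to be even, hence zero in $\Z[\omega]/(2\omega^3,\omega^4)$. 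This simultaneously fixes $r(L^2)=\omega^2+2\omega$ exactly.

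Next I would derive $\psi^4(\omega)$ from the composition law $\psi^4=\psi^2\circ\psi^2$. Writing $\psi^2(\omega)=\omega^2+4\omega$ and applying $\psi^2$ again, I expand using that $\psi^2$ is a ring map, so $\psi^2(\omega^2)=(\psi^2\omega)^2=(\omega^2+4\omega)^2$, and reduce the resulting polynomial modulo $\omega^4$ (and $2\omega^3$). Collecting terms should give $\psi^4(\omega)=20\omega^2+16\omega$; the $\omega^3$ terms must again be checked to vanish in the quotient. Finally, $r(L^4)$ falls out of naturality at $k=4$: from $\psi^4(\omega)=r\bigl((L+1)^4-1\bigr)=r(L^4+4L^3+6L^2+4L)$ and the already-known values $r(L)=\omega$, $r(L^2)=\omega^2+2\omega$, $r(L^3)=3\omega^2$ (the latter from Proposition~\ref{rbar}, whose $\omega^3$ coefficient is likewise $2$-torsion-free by the same argument), I can solve the single equation for $r(L^4)$ and confirm $r(L^4)=2\omega^2$.

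**Main obstacle.** The genuine difficulty is not the arithmetic but controlling the $2$-torsion $\omega^3$-coefficients, since Proposition~\ref{rbar} only determines $\bar r$ after killing $\omega^3$. The crux is to show each relevant $\omega^3$ coefficient is even, so that it dies in $\Z[\omega]/(2\omega^3,\omega^4)$; I expect this to follow from the fact that $\psi^2$ raises $\KO$-filtration in a way that makes the potential $\omega^3$ contributions manifestly divisible by $2$, combined with injectivity of $c$ modulo $2$-torsion as used in Proposition~\ref{rbar}. Once that evenness is established, every other step is a bounded polynomial computation in the quotient ring.
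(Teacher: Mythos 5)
Your plan correctly isolates the crux --- controlling the $2$-torsion $\omega^3$-coefficients --- but neither of the mechanisms you propose for resolving it can work, and the paper's key ingredient never appears in your write-up. Comparing $c\circ\psi^2_{\R}$ with $\psi^2_{\C}\circ c$ gives no information about the $\omega^3$ coefficient: since $K(\CP^5)\cong\Z[L]/(L^6)$ is torsion-free and $2\omega^3=0$, the ring homomorphism $c$ satisfies $2c(\omega^3)=c(2\omega^3)=0$, hence $c(\omega^3)=0$. Complexification annihilates precisely the ambiguity you are trying to resolve --- this is exactly why Proposition \ref{rbar} only computes $\bar r$ in the first place, and why ``injectivity of $c$ modulo $2$-torsion'' is of no help here (the ambiguity \emph{is} $2$-torsion). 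Multiplicativity of $\psi^2$ alone is also insufficient: both $\psi^2(\omega)=\omega^2+4\omega$ and $\psi^2(\omega)=\omega^2+4\omega+\omega^3$ are consistent with $\psi^2$ being a ring endomorphism of $\Z[\omega]/(2\omega^3,\omega^4)$, as one checks on the defining relations (every cube picks up an even coefficient on $\omega^3$, which dies). What the paper actually uses, and what your proposal never invokes, is the Frobenius-type congruence $\psi^p(x)\equiv x^p \pmod{p}$: it gives $\psi^2(\omega)\equiv\omega^2$ modulo $2KO(\CP^5)$, while naturality gives $\psi^2(\omega)=r(L^2)+2\omega\equiv r(L^2)$, so $r(L^2)\equiv\omega^2$ modulo $2KO(\CP^5)$; since $\omega^3\notin 2KO(\CP^5)$ (again because $2\omega^3=0$), this rules out the $\omega^3$ summand and forces $r(L^2)=\omega^2+2\omega$. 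Everything downstream ($\psi^2(\omega)=\omega^2+4\omega$, then $\psi^4(\omega)=(\omega^2+4\omega)^2+4(\omega^2+4\omega)=20\omega^2+16\omega$ using $\omega^4=0$ and $8\omega^3=0$) is then routine, as you say.

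There is also an outright false assertion in your treatment of $r(L^4)$: you claim $r(L^3)=3\omega^2$, with its $\omega^3$ coefficient ``likewise $2$-torsion-free by the same argument.'' In fact the paper proves (Proposition \ref{rL3L5}) that $r(L^3)=\omega^3+3\omega^2$, so the $\omega^3$ term is present; and no analogue of the mod-$2$ argument exists for $L^3$, since the relevant congruence $\psi^3(x)\equiv x^3\pmod{3}$ cannot detect $2$-torsion at all ($\omega^3=3\omega^3\in 3KO(\CP^5)$, so $\omega^3\equiv 0$ mod $3$). Your final equation still produces the right answer, but only because the coefficient $4$ in $r(L^4)+4r(L^3)+6r(L^2)+4r(L)=\psi^4(\omega)$ annihilates any $\omega^3$ summand of $r(L^3)$ (as $4\omega^3=0$); this is exactly how the paper argues, deliberately avoiding any claim about the $\omega^3$ part of $r(L^3)$. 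As written, your proof rests on an evenness claim for $r(L^2)$ that your proposed methods cannot establish, and on an identity for $r(L^3)$ that is false, so the gap is genuine.
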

\begin{proof}
For any prime $p$ and element $x \in KO(X)$ we have $\psi^p(x) = x^p$ mod $p$, so $\psi^2(\omega) = \omega^2$ mod 2. On the other hand, $\psi^2(\omega) = r((L+1)^2 - 1) = r(L^2 + 2L) = r(L^2)$ mod $2$, so $r(L^2) \equiv \omega^2$ mod 2; in other words, $r(L^2) = \omega^2 + 2\alpha$ for some class $\alpha$. But Proposition \ref{rbar} implies either $r(L^2) = \omega^2 + 2\omega$ or $\omega^2 + 2\omega + \omega^3$, and only the former is consistent with our computation modulo $2$ since $2\omega^3 = 0$.

Returning to $\psi^2$, we have $\psi^2(\omega) = r(L^2) + 2r(L) = \omega^2 + 2\omega + 2\omega = \omega^2 + 4\omega$.

Now since $\psi^4 = \psi^2\psi^2$ we have $\psi^4(\omega) = \psi^2(\omega^2 + 4\omega) = \psi^2(\omega)^2 + 4\psi^2(\omega) = (\omega^2 + 4\omega)^2 + 4(\omega^2 + 4\omega) = 20\omega^2 + 16\omega$.

However, we also have $\psi^4(\omega) = r((L+1)^4 - 1) = r(L^4 + 4L^3 + 6L^2 + 4L) = r(L^4) + 4r(L^3) + 6r(L^2) + 4r(L) = r(L^4) + 4r(L^3) + 6\omega^2 + 16\omega$. Thus $r(L^4) + 4r(L^3) = 14\omega^2$. If $r(L^3)$ has $\omega^3$ as a summand, the coefficient $4$ will kill this term and we conclude $r(L^4)$ cannot contain $\omega^3$ as a summand. Thus Proposition \ref{rbar} implies $r(L^4) = 2\omega^2$.
\end{proof}

Thus $\omega^3$ does not appear in the image of $r$ on even powers of $L$. The situation is different for the remaining odd powers:
\begin{proposition} \label{rL3L5}
$r(L^3) = \omega^3 + 3\omega^2$ and $r(L^5) = \omega^3$. 
\end{proposition}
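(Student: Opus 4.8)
The plan is to determine the two $\omega^3$-coefficients left undetermined by Proposition \ref{rbar}, where we know $r(L^3) = 3\omega^2 + \epsilon_3\omega^3$ and $r(L^5) = \epsilon_5\omega^3$ for some $\epsilon_3,\epsilon_5\in\{0,1\}$ (recall $2\omega^3=0$). The first thing to observe is \emph{why} the techniques of the preceding propositions cannot close this gap: the class $\omega^3$ is precisely the $2$-torsion, it is killed by $c$ (indeed $c(\omega)^3 = (L^2-L^3+L^4-L^5)^3 = 0$ since $L^6=0$), and it is also invisible to the mod-$p$ Adams congruences $\psi^p(x)\equiv x^p \pmod p$ for every odd prime $p$, because $\omega^3$ is divisible by every odd integer (e.g. $3\omega^3=2\omega^3+\omega^3=\omega^3$). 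So neither the complexification identity $c\circ r = 1+t$ nor the operations $\psi^2,\psi^3,\psi^5$ can separate $\epsilon_i=0$ from $\epsilon_i=1$. The extra structure I would exploit instead is that $r$ is a homomorphism of $KO(X)$-modules, where $K(X)$ is a $KO(X)$-module via $c$; this yields the projection formula $r(c(y)\,x) = y\,r(x)$, which does survive on the $2$-torsion.

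With this tool, I would first record the complexifications I need. Sanderson's Lemma 3.13 gives $c(\omega) = L^2 - L^3 + L^4 - L^5$, and multiplicativity of $c$ together with $L^6=0$ gives $c(\omega^2) = c(\omega)^2 = L^4 - 2L^5$. Applying the projection formula with $y=\omega^2$ and $x=L$ then produces $r(L^5)$ directly: since $c(\omega^2)L = (L^4 - 2L^5)L = L^5$, we get $r(L^5) = \omega^2\,r(L) = \omega^2\cdot\omega = \omega^3$, so $\epsilon_5 = 1$. Next, applying the projection formula with $y=\omega$ and $x=L$ gives $c(\omega)L = L^3 - L^4 + L^5$, hence $r(L^3) - r(L^4) + r(L^5) = \omega\,r(L) = \omega^2$. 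Substituting the already-established value $r(L^4) = 2\omega^2$ from Proposition \ref{rL2L4} and the value $r(L^5)=\omega^3$ just obtained, and using $-\omega^3 = \omega^3$, yields $r(L^3) = \omega^2 + 2\omega^2 - \omega^3 = 3\omega^2 + \omega^3$, so $\epsilon_3=1$ as well. Both values agree with Proposition \ref{rbar} modulo $\omega^3$, which serves as a consistency check.

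The main obstacle here is conceptual rather than computational: recognizing that the $2$-torsion genuinely cannot be resolved by the complexification and Adams-operation methods that sufficed for the even powers, and that the correct substitute is the $KO$-linearity of $r$. Once the projection formula is in hand the calculations are short, the only care needed being the bookkeeping of high powers of $L$ vanishing (from $L^6=0$) and of the relations $2\omega^3=0$, $\omega^4=0$ in $KO(\CP^5)$. I would also double-check that $r$ is indeed $KO(X)$-linear for the module structure coming from $c$, so that $r(c(y)x)=y\,r(x)$ is justified before invoking it.
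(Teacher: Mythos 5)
Your proof is correct, and it takes a genuinely different route from the paper's. The paper gets the relation $r(L^3) - r(L^4) + r(L^5) = \omega^2$ by citing Thomas's computation that $\ker r$ is freely generated by $H - H^{-1}$, $H^2 - H^{-2}$, and $2L^5$ (so $r(\mu_1) = 0$ for $\mu_1 = H - H^{-1} = 2L - L^2 + L^3 - L^4 + L^5$), and then resolves the torsion ambiguity \emph{indirectly}: from $r(L^3) + r(L^5) = 3\omega^2$ either both or neither of $r(L^3), r(L^5)$ contain $\omega^3$, and ``neither'' would contradict the surjectivity of $r$ established in Proposition \ref{CPsareSAC}. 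You instead exploit the $KO$-module linearity of $r$ (the projection formula $r(c(y)x) = y\,r(x)$), which is exactly the fact recorded in the paper's preliminaries when it says $r$ is ``only a homomorphism of $KO$-modules,'' so the step you flag for double-checking is already justified. Your computation with $y = \omega$, $x = L$ reproduces the same linear relation the paper extracts from $\ker r$ (indeed $c(\omega)L = L^3 - L^4 + L^5$ and $\mu_1 \cdot L$ encode the same information), but your computation with $y = \omega^2$, $x = L$ pins down $r(L^5) = \omega^2 \cdot \omega = \omega^3$ \emph{directly}, replacing both the appeal to Thomas's kernel result and the surjectivity argument. What your approach buys is self-containment and constructiveness: the $2$-torsion coefficients are computed rather than forced by a counting/surjectivity argument, and no external input beyond Sanderson's formula for $c(\omega)$ and the previously established $r(L^4) = 2\omega^2$ is needed. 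What the paper's approach buys is that it needs no multiplicative structure on $r$ at all, only additivity, at the price of citing Thomas's description of $\ker r$. Your preliminary observations ($c(\omega^3) = c(\omega)^3 = 0$ since $L^6 = 0$, and the invisibility of $\omega^3$ to odd-primary congruences) correctly diagnose why the earlier methods stall, which is a nice bonus.
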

\begin{proof}
By Proposition 4.3 of \cite{thomas1974almost}, $\ker r$ is freely generated by $\mu_1 = H - H^{-1}$, $\mu_2 = H^2 - H^{-2}$, and $2L^5$. Using our computation for $H^{-1}$ and $t(L)$ from Proposition \ref{rbar}, we see that $\mu_1 = L - t(L) = 2L - L^2 + L^3 - L^4 + L^5$. Hence $0 = r(\mu_1) = 2r(L) - r(L^2) + r(L^3) - r(L^4) + r(L^5) = 2\omega - \omega^2 - 2\omega + r(L^3) - 2\omega^2 + r(L^5) = r(L^3) + r(L^5) - 3\omega^2$.

Now $r(L^3) + r(L^5) = 3\omega^2$ implies that either both $r(L^3)$ and $r(L^5)$ contain $\omega^3$ as a summand or neither do. If the latter is the case, then $\omega^3$ is not in the image of $r$, but $r$ is surjective as discussed in Proposition \ref{CPsareSAC}. Hence both $r(L^3)$ and $r(L^5)$ contain $\omega^3$ as a summand, and Proposition \ref{rbar} implies $r(L^3) = \omega^3 + 3\omega^2$ and $r(L^5) = \omega^3$.
\end{proof}

Summarizing, we have

\begin{theorem} \label{rformula}
$r: K(X) \to KO(X)$ on the additive generators of $K(X)$ is given as follows.
\begin{align*}
    &1 \mapsto 2 && L^3 \mapsto \omega^3 + 3\omega^2 \\
    &L \mapsto \omega && L^4 \mapsto 2\omega^2 \\
    &L^2 \mapsto 2\omega + \omega^2 && L^5 \mapsto \omega^3
\end{align*}
Moreover, the Adams operation $\psi^k$ on $KO(X)$ sends $\omega$ to $r((L+1)^k - 1)$; in particular, $\psi^2(\omega) = \omega^2 + 4\omega$ and $\psi^4(\omega) = 20\omega^2 + 16\omega$.
\end{theorem}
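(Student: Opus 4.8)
The plan is to assemble Theorem \ref{rformula} from the three preceding propositions, since every value of $r$ on the additive basis $\{1, L, L^2, L^3, L^4, L^5\}$ of $K(X)$ has already been pinned down, and the Adams operation clause merely records the naturality relation that drove those computations. First I would dispose of the two easy entries: $r(1) = 2$ is immediate because real reduction of the trivial complex line bundle contributes its rank, and $L \mapsto \omega$ holds by the very definition $\omega = r(L)$ fixed in Section \ref{preliminaries}.

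Next I would record the even powers. Proposition \ref{rbar} computes $\bar r$, the composite of $r$ with the projection $\KO(X) \to \KO(X)/(\omega^3)$, on every generator, which determines $r(L^2)$ and $r(L^4)$ up to a possible $\omega^3$ summand. Proposition \ref{rL2L4} removes this ambiguity: reducing mod $2$ and invoking $\psi^2(x) \equiv x^2$ forces $r(L^2) = \omega^2 + 2\omega$, the alternative $\omega^2 + 2\omega + \omega^3$ being incompatible with $2\omega^3 = 0$; then the identity $\psi^4(\omega) = \psi^2\psi^2(\omega) = 20\omega^2 + 16\omega$ compared against $\psi^4(\omega) = r((L+1)^4 - 1)$ yields $r(L^4) + 4 r(L^3) = 14\omega^2$, whose coefficient $4$ annihilates any $\omega^3$ appearing in $r(L^3)$ and so forces $r(L^4) = 2\omega^2$.

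For the odd powers I would cite Proposition \ref{rL3L5}, whose input is the description of $\ker r$ (from Proposition 4.3 of \cite{thomas1974almost}) together with the surjectivity of $r$ established in Proposition \ref{CPsareSAC}. The relation $r(\mu_1) = 0$ for $\mu_1 = H - H^{-1}$ gives $r(L^3) + r(L^5) = 3\omega^2$, and surjectivity forces $\omega^3$ to lie in the image of $r$, so both $r(L^3)$ and $r(L^5)$ must carry an $\omega^3$ summand, yielding $r(L^3) = \omega^3 + 3\omega^2$ and $r(L^5) = \omega^3$. Finally the Adams clause follows from $\psi_{\R}^k \circ r = r \circ \psi_{\C}^k$ (Proposition 7.40 of \cite{karoubi2008}) and $\psi_{\C}^k(L) = (L+1)^k - 1$, with the displayed values for $k = 2, 4$ already obtained in Proposition \ref{rL2L4}. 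Since the genuine content lives in the propositions, the remaining work here is bookkeeping; the one point worth flagging is that the resolution of the $2$-torsion ambiguity — distinguishing, say, $2\omega + \omega^2$ from $2\omega + \omega^2 + \omega^3$ — is precisely where surjectivity of $r$ and the mod-$2$ behaviour of $\psi^2$ are indispensable, so any attempt to shortcut those propositions would founder at exactly that step.
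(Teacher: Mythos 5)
Your proposal is correct and follows exactly the paper's own route: Theorem \ref{rformula} is stated there as a pure summary (``Summarizing, we have\dots'') of Propositions \ref{rbar}, \ref{rL2L4}, and \ref{rL3L5}, together with the naturality relation $\psi_{\R}^k \circ r = r \circ \psi_{\C}^k$, which is precisely the assembly you describe. Your recounting of how each proposition resolves the $2$-torsion ambiguity (the mod-$2$ argument via $\psi^2$, the coefficient $4$ killing $\omega^3$, and surjectivity of $r$ forcing $\omega^3$ into the images of $r(L^3)$ and $r(L^5)$) matches the paper's reasoning step for step.
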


The final information necessary to verify that an element $E \in \K(X)$ has the desired top-dimensional Chern class is the Chern classes of the powers of $L = H-1$. Let $c_*$ denote the total Chern class (so as not to confuse it with the complexification homomorphism $c$). We begin with the formulas
\begin{align*}
    &L = H-1 \\
    &L^2 = (H^2 - 1) - 2L \\
    &L^3 = (H^3 - 1) - 3L^2 - 3L \\
    &L^4 = (H^4 - 1) - 4L^3 - 6L^2 - 4L \\
    &L^5 = (H^5 - 1) - 5L^4 - 10L^3 - 10L^2 - 5L.
\end{align*}
Now, since $H^k$ is a line bundle and $c_1$ is additive on products of line bundles, we have $c_*(H^k - 1) = 1 + ku$. This gives us
\begin{align*}
    &c_*(L) = 1 + u \\
    &c_*(L^2) = (1 + 2u)(1+u)^{-2} = 1 - u^2 + 2u^3 - 3u^4 + 4u^5 \\
    &c_*(L^3) = (1+3u)c_*(L^2)^{-3}(1+u)^{-3} = 1 + 2u^3 - 9u^4 + 30u^5 \\
    &c_*(L^4) = (1+4u)c_*(L^3)^{-4}c_*(L^2)^{-6}(1+u)^{-4} = 1 - 6u^4 + 48u^5 \\
    &c_*(L^5) = (1+5u)c_*(L^4)^{-5}c_*(L^3)^{-10}c_*(L^2)^{-10}(1+u)^{-5} = 1 + 24u^5.
\end{align*}

We can now prove that all homotopy $\CP^5$s admit almost complex structures. By Theorem \ref{brumfielclassif}, smooth manifolds homotopy equivalent to $\CP^5$ are given by $X = X_{m,n}$ with $TX = (24m)\omega + (98m + 240n)\omega^2 + T{\CP^5}$ in $\KO(X)$ for integers $m$ and $n$ with $m$ even. We begin by determining the set of stable almost complex structures on $X$.

\begin{lemma} \label{saccp5s}
Let $X = X_{m,n}$ be a homotopy $\CP^5$ as above. The set of stable almost complex structures on $X$ is
$$\{E = \Sigma_{i=1}^5 k_i L^i : k_1 + 2k_2 = 6 + 24m, k_2 + 3k_3 + 2k_4 = 98m + 240n, k_3 + k_5 \equiv 0 \text{ mod } 2 \}.$$
\end{lemma}
\begin{proof}
Since the holomorphic tangent bundle of $\CP^5$ equals $6H - 1$ in $K(X)$, we have $T{\CP^5} = 6\omega$ in $\KO(X)$ so that $TX = (6 + 24m)\omega + (98m + 240n)\omega^2$ in $\KO(X)$. Using Theorem \ref{rformula} one obtains, for an arbitrary element $E = \Sigma_{i=1}^5 k_iL^i \in \K(X)$, that
\[
r(E) = \Sigma_{i=1}^5 k_i r(L^i)
= (k_1 + 2k_2)\omega + (k_2 + 3k_3 + 2k_4)\omega^2 + (k_3 + k_5)\omega^3.
\]
Setting $r(E) = TX$ completes the proof.
\end{proof}

\begin{lemma}\label{topcherncp5s}
The top-dimensional Chern class of $E = \Sigma_{i=1}^5 k_i L^i \in \K(X)$ is $Ku^5$, where $K = K(k_1,\ldots,k_5)$ is given by
\begin{multline*}
    30k_3 + 48k_4 + 24k_5 + k_1(-9k_3 - 6k_4) + 2k_3\left({k_1 \choose 2} - k_2\right) \\ + 4k_2 - 4{k_2 \choose 2} + k_1\left(-3k_2 + {k_2 \choose 2}\right) + k_2\left(2{k_1 \choose 2} - {k_1 \choose 3}\right) + {k_1 \choose 5}.
\end{multline*}
\end{lemma}
\begin{proof}
The total Chern class of $E$ is $c_*(E) = \Pi_{i=1}^5 c_*(L^i)^{k_i}$. We wish to determine the coefficient of $u^5$ in $c_*(E)$. Using binomial expansions and the formulas above, one has
\begin{align*}
    &c_*(L)^{k_1} = (1 + u)^{k_1} = 1 + k_1 u + {k_1 \choose 2}u^2 + \cdots + {k_1 \choose 5}u^5 \\
    &c_*(L^2)^{k_2} =  1 - k_2u^2 + 2k_2u^3 + \left({k_2 \choose 2} - 3k_2\right)u^4 + \left(4k_2 - 4{k_2 \choose 2}\right)u^5\\
    &c_*(L^3)^{k_3} = 1 + 2k_3u^3 - 9k_3u^4 + 30k_3u^5 \\
    &c_*(L^4)^{k_4} = 1 - 6k_4u^4 + 48k_4u^5 \\
    &c_*(L^5)^{k_5} = 1 + 24k_5u^5
\end{align*}
where we use the conventions that ${i \choose j} = 0$ if $i \geq 0$ and $i < j$, and ${-i \choose j} = (-1)^j{i + j - 1 \choose j}$ for $i > 0$.

Thus $c_*(k_3L^3 + k_4L^4 + k_5L^5) = 1 + 2k_3u^3 - (9k_3 + 6k_4)u^4 + (30k_3 + 48k_4 + 24k_5)u^5$, so obtaining the coefficient of $u^5$ in $c_*(E)$ only requires knowledge of the coefficients of $u$, $u^2$, and $u^5$ in $c_*(k_1 L + k_2 L^2)$. These are $k_1$, ${k_1 \choose 2} - k_2$, and $4k_2 - 4{k_2 \choose 2} + k_1\left(-3k_2 + {k_2 \choose 2}\right) + k_2\left(2{k_1 \choose 2} - {k_1 \choose 3}\right) + {k_1 \choose 5}$ (respectively). It follows directly from this that the coefficient of $u^5$ in $c_*(E)$ is as claimed.
\end{proof}

The previous two lemmas combine to give a classification of almost complex structures on homotopy $\CP^5$s, by simply adding the requirement that $K = 6$ in the set of almost complex structures on homotopy $\CP^5$s given in Lemma \ref{saccp5s}. We claim that the resulting set is nonempty by exhibiting an explicit element:

\begin{theorem} \label{accp5thm}
Let $E = 6L + 12mL^2 + 80n L^3 + 43m L^4 + (-19m - 20n - 6m^2 + 80mn)L^5$. Then $E$ is a stable almost complex structure on $X_{m,n}$ with top Chern class $c_5(E) = e(X) = 6u^5$, hence $X$ admits an almost complex structure.
\end{theorem}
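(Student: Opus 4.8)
The plan is to verify the two hypotheses of Theorem \ref{stabunstab} for $\xi = TX$ with $X = X_{m,n}$, using $E$ as the candidate stable complex structure: that $r(E)$ equals the reduced class of the stable tangent bundle in $\KO(X)$ (so that $E$ genuinely defines a stable almost complex structure), and that its top Chern class $c_5(E)$ equals the Euler class $e(X)$. Granting both, Theorem \ref{stabunstab} applies (its hypotheses on dimension and $2$-torsion being automatic for the tangent bundle of a $10$-manifold) and produces an honest complex structure on $TX$.

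First I would confirm the stable condition. By Theorem \ref{brumfielclassif}(ii) the reduced class of $TX$ is $24m\omega + (98m+240n)\omega^2 + \tau_{\CP^5}$, where $\tau_{\CP^5} = r(6L)$ is the realification of the complex stable tangent bundle $6L$ of $\CP^5$ (using $T\CP^5 \oplus \epsilon^1 \cong 6H$ as complex bundles). Since $E = 6L + (E - 6L)$ it suffices to show $r(E - 6L) = 24m\omega + (98m+240n)\omega^2$. Applying Theorem \ref{rformula} to $E - 6L = 12mL^2 + 80nL^3 + 43mL^4 + sL^5$, with $s = -19m - 20n - 6m^2 + 80mn$, and collecting terms, the $\omega$- and $\omega^2$-coefficients come out to exactly $24m$ and $98m+240n$, while the $\omega^3$-coefficient is $60n - 19m - 6m^2 + 80mn$, a polynomial that is even precisely because $m$ is even, and hence vanishes since $2\omega^3 = 0$ in $\KO(\CP^5) = \Z[\omega]/(2\omega^3,\omega^4)$. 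This is where the hypothesis $m$ even is used; it is also why the coefficient of $L^5$ is constrained only modulo $2$ by the stable condition and is therefore free to be tuned to fix the top Chern class.

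Second, and this is the main computational step, I would compute $c_5(E)$ from the Whitney product formula
\[
c_*(E) = c_*(L)^6\, c_*(L^2)^{12m}\, c_*(L^3)^{80n}\, c_*(L^4)^{43m}\, c_*(L^5)^{s},
\]
substituting the total Chern classes $c_*(L^i)$ computed above and extracting the coefficient of $u^5$, all higher powers vanishing since $u^6 = 0$. To keep this tractable I would pass to logarithms: as $\log c_*$ is additive, $\log c_*(E) = 6\log c_*(L) + 12m\log c_*(L^2) + \cdots + s\log c_*(L^5)$ is a short linear combination whose coefficients $A_1,\dots,A_5$ of $u,\dots,u^5$ are easy to read off, and the $u^5$-coefficient of $c_*(E) = \exp(\log c_*(E))$ is then a fixed universal (complete Bell) polynomial in $A_1,\dots,A_5$. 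The crux is that, after substituting the stated value of $s$, all contributions in $m$, $n$, $m^2$, and $mn$ cancel, leaving the constant $6$; this cancellation is exactly what dictated the coefficient of $L^5$. Hence $c_5(E) = 6u^5$.

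Finally, since $X$ is an oriented homotopy $\CP^5$ its Euler characteristic is $\chi(\CP^5) = 6$, so $e(X) = 6u^5 = c_5(E)$. Both hypotheses of Theorem \ref{stabunstab} then hold, and $TX$ admits a complex structure. I expect the degree-$5$ Chern class computation — tracking the many $m$- and $n$-dependent terms through the exponential and confirming their complete cancellation — to be the only real obstacle, the stable-structure verification being a direct substitution into Theorem \ref{rformula}.
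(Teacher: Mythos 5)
Your proposal is correct and takes essentially the same approach as the paper: verify $r(E) = TX$ in $\KO(X)$ via Theorem \ref{rformula} (with the $\omega^3$-coefficient, which is even exactly because $m$ is even, killed by $2\omega^3 = 0$), compute $c_5(E) = 6u^5$ from the Whitney product formula, and conclude via Theorem \ref{stabunstab}. The only difference is organizational: the paper multiplies the truncated total Chern classes $c_*(L^i)^{k_i}$ directly rather than passing through logarithms, and it exhibits the cancellation you predict explicitly as $c_5(E) = (6+6K)u^5$ with $K = k_3 + 2k_4 + 4k_5 + 24m^2 - 10m - 4mk_3 = 0$.
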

\begin{proof}
Note that $E$ satisfies the conditions of Lemma \ref{saccp5s} (the coefficient of $\omega^3$ vanishes since $m$ is even), so $E$ is a stable almost complex structure on $X$. It remains to show that $K = K(6, 12m, 80n, 43m, -19m-20n-6m^2+80mn)$ from Lemma \ref{topcherncp5s} equals 6.

$K$ has several terms so we proceed in smaller steps. First, substituting $k_1 = 6, k_2 = 12m$ into $K$, we obtain
\begin{multline*}
    30k_3 + 48k_4 + 24k_5 + 6(-9k_3 - 6k_4) + 2k_3(15 - 12m) \\ + 72m - 288m^2 + 6(72m^2 - 42m) + 12m(10) + 6,
\end{multline*}
which simplifies to $6 + 6k_3 + 12k_4 + 24k_5 + 144m^2 - 60m - 24mk_3$.
But one computes
\begin{align*}
    \tilde{K} &= k_3 + 2k_4 + 4k_5 + 24m^2 - 10m - 4mk_3 \\
    &= 80n + 2(43m) + 4(-19m - 20n - 6m^2 + 80mn) + 24m^2 - 10m - 4m(80n) \\
    &= 80n + 86m - 76m - 80n - 24m^2 + 320mn + 24m^2 - 10m - 320mn \\
    &= 0,
\end{align*}
and $K = 6 + 6\tilde{K}$, hence $K = 6$. Thus $c_5(E) = 6u^5 = e(X)$. By Theorem \ref{stabunstab}, $X$ is almost complex.
\end{proof}

\bibliography{cps}
\bibliographystyle{alpha}
%\nocite{*}

\end{document}